\newtheorem{theorem}{Theorem}[section]
\newtheorem{lemma}[theorem]{Lemma}
\newtheorem{proposition}[theorem]{Proposition}
\newtheorem{remark}[theorem]{Remark}
\theoremstyle{definition}
\newtheorem{example}[theorem]{Example}
\numberwithin{equation}{section}
\begin{document}
\title{The Fragmentation Equation with Size Diffusion: Small and Large Size behavior of Stationary Solutions} 
\thanks{Partially supported by Deutscher Akademischer Austauschdienst funding programme \textsl{Research Stays for University Academics and Scientists, 2021} (57552334)}
%
\author{Philippe Lauren\c{c}ot}
\address{Institut de Math\'ematiques de Toulouse, UMR~5219, Universit\'e de Toulouse, CNRS \\ F--31062 Toulouse Cedex 9, France}
\email{laurenco@math.univ-toulouse.fr}
\author{Christoph Walker}
\address{Leibniz Universit\"at Hannover\\ Institut f\"ur Angewandte Mathematik \\ Welfengarten 1 \\ D--30167 Hannover\\ Germany}
\email{walker@ifam.uni-hannover.de}
%
\keywords{fragmentation - size diffusion - stationary solution - asymptotics}
\subjclass{45K05}
\date{\today}
%
\begin{abstract}
The small and large size behavior of stationary solutions to the fragmentation equation with size diffusion is investigated. It is shown that these solutions behave like stretched exponentials for large sizes, the exponent in the exponential being solely given by the behavior of the overall fragmentation rate at infinity. In contrast, the small size behavior is partially governed by the daughter fragmentation distribution and is at most linear, with possibly non-algebraic behavior. Explicit solutions are also provided for particular fragmentation coefficients. 
\end{abstract}
%
\maketitle
%
%
\pagestyle{myheadings}
\markboth{\sc{Ph.~Lauren\c{c}ot \& Ch.~Walker}}{\sc{Behavior of stationary solutions to fragmentation-diffusion equation}}
%
%
\section{Introduction}\label{Sec1}

In \cite{FJMOS2003} the fragmentation equation with diffusion
is proposed to predict the growth of ice crystals as the result of the interplay between diffusion and fragmentation. Triggered by the competition between these two mechanisms,  the existence of stationary states satisfying the nonlocal equation
\begin{subequations}\label{SFD0}
\begin{align}
- f''(x) + a(x) f(x) & = \int_x^\infty a(y) b(x,y) f(y)\ \mathrm{d}y\,, \qquad x\in (0,\infty)\,, \label{SFD1} \\
f(0) & = 0\,, \label{SFD2} 
\end{align}
\end{subequations}
is of particular interest, where $f=f(x)$ denotes the (stationary) size distribution function of particles of size $x\in (0,\infty)$, while $a(x)\ge 0$ is the overall fragmentation rate of particles of size $x$ and $b(x,y)\ge 0$ is the daughter distribution function for particles of size $y$ splitting into particles of size $x<y$. The second-order derivative in \eqref{SFD0} reflects size diffusion (with diffusion rate scaled to 1). 
For the particular case 
\begin{equation*}
	a(x)=\mathfrak{a} x^\gamma\,, \quad b(x,y) = \frac{2}{y}\,, \qquad 0<x<y\,, 
\end{equation*} 
with $\gamma\ge 0$, the steady state can be computed explicitly and reveals a good agreement with experimental data when $\gamma=1$ as shown in  \cite{FJMOS2003,MFJLODS2004}.
A comparison of the steady state with the length distribution of $\alpha$-helices of proteins is also reported in \cite{FJMOS2003}. The experimental curves exhibit a peak for small sizes with a power law behavior near zero and a fast decaying tail for large sizes. The above mentioned explicit steady state matches these two features. 

Since the existence of (non-explicit) stationary solutions to the fragmentation equation with diffusion has been established for a rather large class of fragmentation coefficients \cite{Lau2004,LaWa21}, a first question motivated by the findings in \cite{FJMOS2003} is whether explicit solutions can be computed for a broader choice of fragmentation coefficients than the particular choice above. The next result shows that this is indeed the case. Owing to the linearity of \eqref{SFD0}, we use the total mass
$$
M_1(f):=\int_0^\infty x f(x)\,\mathrm{d} x
$$
as a normalization parameter.

\begin{proposition}\label{Prop1}
Assume that there are $\mathfrak{a}>0$, $\gamma\ge 0$ and $\nu\in (-2,0]$ such that
\begin{equation}
	a(x) =  \mathfrak{a}x^\gamma\,, \quad b(x,y) = (\nu+2) \frac{x^\nu}{y^{\nu+1}}\,, \qquad 0<x<y\,. \label{In1}
\end{equation}
There is a unique stationary solution $f_{\gamma,\nu}$ to \eqref{SFD0} such that $M_1(f_{\gamma,\nu})=1$. It is given by 
\begin{equation}
	f_{\gamma,\nu}(z) = c_{\gamma,\nu}{ \sqrt{\mathfrak{a}}} z^{(\nu+3)/2} K_{|\nu+1|/(\gamma+2)}\left( \frac{2 \sqrt{\mathfrak{a}}}{\gamma+2} z^{(\gamma+2)/2} \right)\,, \qquad z\in (0,\infty)\,, \label{In2}
\end{equation}
where $c_{\gamma,\nu}$ is a scaling parameter guaranteeing that $M_1(f_{\gamma,\nu})=1$ and $K_\rho$ denotes the modified Bessel function of the second kind with parameter $\rho\ge 0$.
\end{proposition}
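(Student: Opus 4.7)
The plan is to construct $f_{\gamma,\nu}$ by direct verification, the formula \eqref{In2} being an ansatz suggested by the Bessel-like structure of the operator $-\partial_x^2 + \mathfrak{a}x^\gamma$. First I would insert the explicit forms of $a$ and $b$ into \eqref{SFD1}, rewriting it as
\[
-f''(x) + \mathfrak{a}x^\gamma f(x) \;=\; \mathfrak{a}(\nu+2)\, x^\nu\, \int_x^\infty y^{\gamma-\nu-1}\, f(y)\,\mathrm{d}y\,,
\]
and introduce the change of variable $z := \alpha x^{(\gamma+2)/2}$ with $\alpha := 2\sqrt{\mathfrak{a}}/(\gamma+2)$, together with the index $\rho := |\nu+1|/(\gamma+2)\ge 0$.

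With $f(x) = c\sqrt{\mathfrak{a}}\, x^{(\nu+3)/2} K_\rho(z)$, the substitution $z' = \alpha y^{(\gamma+2)/2}$ on the right-hand side turns the integral, up to a multiplicative constant, into $\int_z^\infty (z')^{1\mp \rho}\, K_\rho(z')\,\mathrm{d}z'$ (the sign depending on the sign of $\nu+1$). By the identity $\tfrac{\mathrm{d}}{\mathrm{d}z}[z^\mu K_\mu(z)] = -z^\mu K_{\mu-1}(z)$ together with the exponential decay of $K$ at infinity, this evaluates explicitly to a multiple of $x^{(\gamma+\nu+1)/2}\, K_{1\mp \rho}(z)$. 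On the left-hand side, I would compute $f''$ via the product rule, eliminate $K_\rho''$ using the modified Bessel equation $K_\rho''(z) + z^{-1}K_\rho'(z) = (1+\rho^2/z^2)K_\rho(z)$, and observe that the resulting $K_\rho(z)$ contribution cancels against $\mathfrak{a}x^\gamma f$ thanks to $\rho^2(\gamma+2)^2 = (\nu+1)^2$, leaving a residue proportional to $x^{(\nu-1)/2} K_\rho(z)$ together with a term containing $K_\rho'(z)$. The standard recurrences $K_\rho'(z) = -K_{\rho\mp 1}(z) \mp (\rho/z) K_\rho(z)$ then combine these into a multiple of $x^{(\gamma+\nu+1)/2}\, K_{\rho\mp 1}(z)$, which matches the right-hand side via the parity $K_{-\mu}=K_\mu$.

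It remains to verify the boundary condition $f_{\gamma,\nu}(0)=0$ and finiteness of $M_1(f_{\gamma,\nu})$, both of which would follow from the classical asymptotics $K_\rho(z)\sim\tfrac{1}{2}\Gamma(\rho)(z/2)^{-\rho}$ as $z\to 0^+$ for $\rho>0$ (with $K_0(z)\sim -\log z$) and $K_\rho(z)\sim\sqrt{\pi/(2z)}\,e^{-z}$ as $z\to\infty$: the former yields $f_{\gamma,\nu}(x)\to 0$ with rate $x$, $x|\log x|$, or $x^{\nu+2}$ for $\nu\in(-1,0]$, $\nu=-1$, or $\nu\in(-2,-1)$, respectively, while the latter gives stretched-exponential decay. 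Hence there is a unique $c_{\gamma,\nu}>0$ with $M_1(f_{\gamma,\nu})=1$, and the uniqueness of the normalised stationary solution itself follows from the results in \cite{LaWa21}.

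The main obstacle I anticipate is the bookkeeping in the left-hand-side verification: the two cases $\nu\ge -1$ and $\nu<-1$ force different choices of recurrence (producing $K_{\rho-1}$ in one case and $K_{\rho+1}$ in the other), and reconciling each with the corresponding integral evaluation on the right-hand side ultimately depends on the symmetry $K_{-\mu}=K_\mu$. This parity is precisely what allows the single formula \eqref{In2} with $|\nu+1|$ in the index to cover the full range $\nu\in(-2,0]$ uniformly.
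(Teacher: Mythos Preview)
Your approach is correct but genuinely different from the paper's. You verify the formula \eqref{In2} directly by plugging it into the second-order equation \eqref{SFD1} and using Bessel recurrences on both sides. The paper instead \emph{derives} the formula: starting from the first-order reformulation \eqref{AEQ1}, it introduces the auxiliary function $P(z):=\int_z^\infty y^{\gamma-\nu-1}f(y)\,\mathrm{d}y$, shows that $P$ satisfies a singular linear ODE, and then reduces this to the modified Bessel equation by the substitution $P(z)=z^\beta Q(\sqrt{\mathfrak{a}}\,z^\alpha/\alpha)$ with $\beta=(1+\gamma-\nu)/2$; differentiating the resulting $P=c\,z^\beta K_{\beta/\alpha}$ back via $f(z)=-z^{1+\nu-\gamma}P'(z)$ yields \eqref{In2}.

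What each buys: the paper's route explains \emph{why} the Bessel function appears and avoids the case split on $\mathrm{sgn}(\nu+1)$ almost entirely, since $\beta/\alpha>0$ throughout and the parity $K_{-\mu}=K_\mu$ enters only once at the very end. Your verification is more self-contained (it does not rely on \eqref{AEQ1}) and is arguably quicker if one is handed the formula, but it carries the case-by-case bookkeeping you anticipate, and one must be careful that the $K_\rho'$-term and the residual $x^{(\nu-1)/2}K_\rho$ term really do combine into exactly $\mathfrak{a}(\nu+2)x^\nu$ times the integral, with the correct constant. Both approaches defer uniqueness to the results of \cite{LaWa21}.
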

 
As pointed out above, the solution $f_{\gamma,0}$ is already computed in \cite{FJMOS2003} for the case $\gamma\ge 0$ and $\nu=0$. Note that, if $\gamma=\nu=0$, then $f_{0,0}(z) = z e^{-z}$, since $K_{1/2}(z) = \sqrt{\pi/(2z)} e^{-z}$ by \cite[Equation~10.39.2]{DLMF}. 

In view of the known properties of modified Bessel functions, an interesting outcome of \Cref{Prop1} is the identification of the behavior of the stationary solution $f_{\gamma,\nu}$ for small and large sizes. Specifically, we infer from \cite[Equation~10.25.3]{DLMF} that, as $z\to\infty$, 
\begin{equation}
	f_{\gamma,\nu}(z) \sim c_{\gamma,\nu} \frac{\sqrt{ \sqrt{\mathfrak{a}}\pi(\gamma+2)}}{2} z^{(4+2\nu-\gamma)/4} e^{-2{{ \sqrt{\mathfrak{a}}}} z^{(\gamma+2)/2}/(\gamma+2)} \label{In20}
\end{equation}
and from \cite[Equations~10.30.2 \&~10.30.3]{DLMF} that, as $z\to 0$, 
\begin{equation}
	\begin{split}
		f_{\gamma,\nu}(z) & \sim \frac{c_{\gamma,\nu} \sqrt{\mathfrak{a}}}{2}\, \Gamma\left(\frac{\nu+1}{\gamma+2}\right)\,\left( \frac{\gamma+2}{\sqrt{\mathfrak{a}}}\right)^{(\nu+1)/(\gamma+2)} z \;\;\text{ for }\;\; \nu \in (-1,0]\,, \\
		f_{\gamma,-1}(z) & \sim - c_{\gamma,-1}\sqrt{\mathfrak{a}}\frac{\gamma+2}{2} z \ln{z} \;\;\text{ for }\;\; \nu =-1\,, \\
		f_{\gamma,\nu}(z) & \sim \frac{c_{\gamma,\nu} \sqrt{\mathfrak{a}}}{ 2 }\, \Gamma\left(\frac{\vert\nu+1\vert}{\gamma+2}\right)\,\left( \frac{\gamma+2}{ \sqrt{\mathfrak{a}}}\right)^{\vert\nu+1\vert/(\gamma+2)} z^{\nu+2} \;\;\text{ for }\;\; \nu \in (-2,-1)\,.
	\end{split} \label{In21}
\end{equation}
In particular, the leading order of the behavior of $f_{\gamma,\nu}$ for large sizes is solely determined by the overall fragmentation rate $a$ and features a stretched exponential tail when $a$ is not constant. The influence of $b$ is in fact only retained  in the exponent  $(4+2\nu-\gamma)/4$ of the algebraic factor. In contrast, the small size behavior of $ f_{\gamma,\nu}$ is prescribed by the daughter distribution function $b$ and reflects the singularity of the latter. Observe that, while $f_{\gamma,\nu}$ may vanish at an arbitrary slower rate near zero, it cannot vanish faster than linearly. It is worth emphasizing here that such a dichotomy is already observed for self-similar solutions to the fragmentation equation without diffusion, see \cite{BCG2013, Fil1961, McZi1987}. \\

The observations derived from \Cref{Prop1} provide the guidelines and the impetus to investigate the small and large size behaviors of solutions to \eqref{SFD0} for a broader class of fragmentation coefficients $a$ and $b$, one aim being to figure out whether the behaviors reported in \eqref{In20} and \eqref{In21} have a generic character. 
We provide in this paper several results in that direction. Since their statements require specific assumptions on the fragmentation coefficients $a$ and $b$, we illustrate our findings in the next result on the particular case when $a$ obeys a power law as in \Cref{Prop1}, while $b$ is of self-similar form. This specific choice allows us to have a concise and rather complete statement. We refer to the subsequent sections for more general results derived under more technical assumptions.

\begin{theorem}\label{Thm2}
Assume  there are $\gamma\ge 0$ and $\mathfrak{a}>0$ such that the fragmentation rate $a$ satisfies
\begin{equation}\label{In3x}
	a(x) = \mathfrak{a} x^\gamma\,, \qquad x>0\,.
\end{equation}
Moreover, assume that the daughter distribution function $b$ is of self-similar form
\begin{subequations}\label{In3}
\begin{equation}
	b(x,y) = \frac{1}{y} h\left( \frac{x}{y} \right)\,, \qquad 0<x<y\,, 
\end{equation}
where $h\in L_1((0,1),z\mathrm{d}z)$ is a nonnegative function  satisfying
\begin{equation}
\int_0^1 z h(z)\ \mathrm{d}z = 1
\end{equation}
\end{subequations}
and
\begin{equation}
	 \int_0^1 z^m h(z)\ \mathrm{d}z \le \frac{\chi}{m}\,, \qquad m\ge m_0\,, \label{B3}
\end{equation} 
for some $m_0\ge 1$ and  $\chi> 0$. Set $\alpha := (\gamma+2)/2$. There is a  unique solution $f$ to  \eqref{SFD0} with $M_1(f)=1$, see \Cref{Prop2} below, which satisfies:  \vspace{2mm}  
\begin{itemize}
\item [(a)] {\bf Large size behavior:} For each $\mu>(\alpha+\chi-1)/2$, there  is $\kappa_\mu>0$ such that
\begin{equation*}
	f(1) x^{-\gamma/4} e^{- \sqrt{\mathfrak{a}} x^{\alpha}/\alpha} \le f(x) \le \kappa_\mu x^{1+ \alpha+\mu} e^{- \sqrt{\mathfrak{a}} x^{\alpha}/\alpha}\,, \qquad x\ge 1\,.
\end{equation*}    \vspace{.001mm}
\item [(b)]  {\bf Small size behavior:} Either $h\in L_1(0,1)$ and there is $\ell_0>0$ such that 
$$
f(z)\sim \ell_0 z \;\;\text{ as }\;\; z\to 0\,.
$$
Or $h\not\in L_1(0,1)$ and, if there is $\lambda\in [-1,0]$ such that
\begin{equation}
	\begin{split}
	H\left( \frac{z}{y} \right) & \sim y^\lambda H(z) \;\;\text{ as }\;\; z\to 0 \;\;\text{ for all }\;\; y>0\,, \\
	H\left( \frac{z}{y} \right) & \le y^\lambda (y+1) H(z)\,, \qquad 0<z<y\,, 
	\end{split} \label{B4}
\end{equation}
where
\begin{equation*}
	H(z) := \int_0^z y h(y)\ \mathrm{d}y\,, \qquad z\in [0,1]\,,
\end{equation*}
then
\begin{equation*}
	f(z) \sim \Lambda_0 z \int_z^1 \frac{H(y)}{y^2}\ \mathrm{d}y \;\;\text{ as }\;\; z\to 0 \;\;\text{ with }\;\; \Lambda_0 := \int_0^\infty y^{1+\lambda} a(y) f(y)\ \mathrm{d}y\,.
\end{equation*}
\end{itemize}
\end{theorem}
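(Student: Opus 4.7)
For part (a), I would use a comparison argument with explicit sub- and super-solutions. For the lower bound, the Liouville--Green ansatz $\tilde\psi(x) := x^{-\gamma/4} e^{-\sqrt{\mathfrak{a}} x^\alpha/\alpha}$ satisfies, by direct computation,
\begin{equation*}
-\tilde\psi''(x) + a(x)\tilde\psi(x) = -\frac{\gamma(\gamma+4)}{16\,\mathfrak{a}^{1/4}}\, x^{-\gamma/4-2}\, e^{-\sqrt{\mathfrak{a}} x^\alpha/\alpha} \le 0,
\end{equation*}
so $\Psi := f(1)\tilde\psi$ is a sub-solution of $-\partial_x^2 + a$. Since $-f''+af = N \ge 0$, $\Psi(1)\le f(1)$, and both $f$ and $\Psi$ decay at infinity, the weak maximum principle yields $f \ge \Psi$ on $[1,\infty)$. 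For the upper bound I try the super-solution $\Phi(x) := \kappa_\mu x^{1+\alpha+\mu} e^{-\sqrt{\mathfrak{a}} x^\alpha/\alpha}$; expanding $-\Phi''+a\Phi$ and substituting $z = x/y$ in $\int_x^\infty a(y)b(x,y)\Phi(y)\,dy$ reduces the super-solution condition, after elimination of the common exponential factor, to
\begin{equation*}
\mathfrak{a}\, x^{\alpha} \int_0^1 z^{-(3\alpha+\mu)} h(z)\,e^{-\xi(z^{-\alpha}-1)}\,dz \;\lesssim\; \sqrt{\mathfrak{a}}\,(3\alpha+1+2\mu), \qquad \xi := \frac{\sqrt{\mathfrak{a}} x^\alpha}{\alpha}.
\end{equation*}
A Laplace-type localization near $z=1$ combined with the moment estimate (B3) (which controls the mass of $h$ there via $\int z^m h\,dz \le \chi/m$) then yields the inequality for $x$ large precisely when $\mu > (\alpha+\chi-1)/2$. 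Fixing $\kappa_\mu$ so that $\Phi \ge f$ on an initial interval and invoking a monotone Picard iteration of the Green-function fixed point formulation (standard elliptic maximum principles being inapplicable due to the non-local kernel) yields $f \le \Phi$ on $[1,\infty)$.

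For part (b), first assume $h \in L_1(0,1)$. Fubini combined with $\int_0^y b(x,y)\,dx = \|h\|_{L_1}$ gives $\|N\|_{L_1(0,1)} \le \|h\|_{L_1} \int_0^\infty a(y) f(y)\,dy < \infty$. Integrating $f'' = af - N$ twice with $f(0) = 0$ yields
\begin{equation*}
f(x) = x f'(1) + \int_0^x y (N - af)(y)\,dy + x \int_x^1 (N - af)(y)\,dy,
\end{equation*}
so $f(x)/x \to \ell_0 := f'(1) + \int_0^1 (N - af)(y)\,dy$ as $x\to 0$. Positivity $\ell_0 > 0$ follows from Hopf's lemma applied at $x = 0$ to $-f''+af \ge 0$, using $f > 0$ on $(0,\infty)$ and $f(0)=0$.

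In the remaining case $h \notin L_1(0,1)$ with (B4), the key identity comes from writing $h(x/y)\,dy = -(y^3/x^2)\,d[H(x/y)]$ and integrating by parts:
\begin{equation*}
N(x) = a(x) f(x) + \frac{\mathfrak{a}}{x^2} \int_x^\infty H\!\left(\frac{x}{y}\right) \bigl(y^{\gamma+2} f(y)\bigr)'\,dy,
\end{equation*}
which together with the equation gives $-f''(x) = (\mathfrak{a}/x^2) \int_x^\infty H(x/y)(y^{\gamma+2} f(y))'\,dy$. Applying (B4) with dominated convergence (dominating bound $H(x/y) \le y^\lambda(y+1)H(x)$; integrability of $y^\lambda(y+1)|(y^{\gamma+2}f)'(y)|$ ensured by the exponential tail from part (a) and the a priori regularity of $f$ near $0$ from Proposition 2) yields
\begin{equation*}
\int_x^\infty H(x/y)\,(y^{\gamma+2} f(y))'\,dy \;\sim\; H(x) \int_0^\infty y^\lambda (y^{\gamma+2}f(y))'\,dy = -\lambda H(x)\Lambda_0/\mathfrak{a}
\end{equation*}
as $x\to 0$, the last equality by integration by parts with vanishing boundary terms. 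Hence $-f''(x) \sim -\lambda \Lambda_0 H(x)/x^2$; integrating twice from $0$ and exchanging orders of integration gives $f(x) \sim -\lambda\Lambda_0\bigl[\int_0^x H(s)/s\,ds + x \int_x^1 H(s)/s^2\,ds\bigr]$, and the power-law behavior $H(s) \sim Cs^{-\lambda}$ implicit in (B4) reconciles the two pieces into the single expression $\Lambda_0 x \int_x^1 H(y)/y^2\,dy$. The principal obstacle is the borderline case $\lambda = 0$, where the formal leading coefficient $-\lambda\Lambda_0$ vanishes; a Karamata-tauberian refinement exploiting the non-asymptotic pointwise bound in (B4) together with the slow variation of $H$ is required to recover the correct leading behavior.
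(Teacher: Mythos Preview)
Your lower bound in (a) matches the paper's (Lemma~4.1). For the upper bound, however, the paper does \emph{not} construct a supersolution. Instead it multiplies \eqref{SFD1} by $x^{m\alpha-\mu}$, integrates, and uses \eqref{B3} to obtain a recursive moment inequality $a_*\delta_{m\alpha-\mu}\mathfrak{M}_m\le (m\alpha-\mu)(m\alpha-\mu-1)\mathfrak{M}_{m-2}$; summing against $\eta^m/m!$ yields the weighted $L_1$-bound $\int_1^\infty e^{\eta x^\alpha}x^{\gamma-\mu-\alpha}f(x)\,\mathrm{d}x<\infty$, which is then upgraded to a pointwise bound via the first-order identity \eqref{AEQ2}. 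Your supersolution route is not obviously wrong, but the two delicate steps---the Laplace localization producing the threshold $\mu>(\alpha+\chi-1)/2$, and the monotone Picard iteration for the nonlocal problem---are only sketched, and neither is standard. The moment method bypasses both issues.

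For (b) with $h\notin L_1(0,1)$ there is a genuine gap. By working with $-f''$ and integrating by parts in $y$, you pick up a factor $-\lambda$ in the leading term, which you yourself flag as fatal when $\lambda=0$. The paper avoids this entirely by using the \emph{first-order} conservative identity (Proposition~2.3)
\begin{equation*}
\frac{\mathrm{d}}{\mathrm{d}z}\!\left(\frac{f(z)}{z}\right)=-\frac{1}{z^2}\int_z^\infty y\,a(y)f(y)\,H\!\left(\frac{z}{y}\right)\mathrm{d}y\,,
\end{equation*}
to which \eqref{B4} and dominated convergence apply directly (the dominating function is $y^{1+\lambda}(y+1)a(y)f(y)$, integrable by Proposition~2.1, with no derivative of $f$ involved). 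This gives $(f(z)/z)'\sim -\Lambda_0 H(z)/z^2$ uniformly in $\lambda\in[-1,0]$, and a single integration yields the claim. The missing idea in your argument is precisely this first-order reformulation: integrating the fragmentation operator once against $x$ turns it into a perfect derivative, so that one works with $(f/z)'$ rather than $f''$ and no spurious $\lambda$-factor ever appears.
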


 \Cref{Thm2} reveals that the leading order for large sizes is either an exponential for constant rates or a stretched exponential for non-constant power law rates $a$. It is  generic as it does not depend on the daughter distribution function $b$. The latter might play a role in the algebraic factor which we cannot determine without a detailed knowledge thereof. As for the small size behavior we observe that $f$ cannot vanish faster than linearly as $z\to 0$ and that it is only determined by the behavior of the daughter distribution function for small sizes.

As we shall see below, some of these results extend to non-homogeneous fragmentation rates $a$ and arbitrary daughter distribution functions $b$. More precisely, after recalling the existence and uniqueness of a solution $f$ to \eqref{SFD0} in \Cref{Sec2} for a general class of coefficients $a$ and $b$, we establish a handful of basic properties thereof including preliminary behaviors of $f$ for small sizes. In \Cref{Sec3} we deepen this analysis, first addressing the finiteness of negative moments of $f$ under suitable assumptions for general daughter distribution functions $b$. We then identify precisely the small size behavior of $f$ for self-similar daughter distribution functions $b$ as in \eqref{In3}. This provides in particular a proof of \Cref{Thm2}~{\bf (b)}. We also give an example that $f$ need not have an algebraic behavior near zero. We then turn to the large size behavior of $f$ in \Cref{Sec4}, where we assume a power law for the fragmentation rate $a$. On the one hand, we derive a lower bound on $f$ by the comparison principle. On the other hand, we use moment estimates and adapt some arguments from \cite{BCG2013,CCM2011} in order to obtain the upper bound on $f$ exhibiting the (stretched) exponential tail. This yields \Cref{Thm2}~{\bf (a)}. It is worth mentioning that the derivation of exponential bounds for solutions to kinetic equations is a very active field of research nowadays, in particular for Boltzmann equations, see \cite{ACGM2013,GPT2019,GPV2009,PaTa2018,TAGP2018} and the references therein. Finally, we sketch in \Cref{Sec5} the computations leading to the explicit solutions given in \Cref{Prop1}.

\section{Preliminary Results}\label{Sec2}

 In this section we recall the existence and uniqueness of a solution to \Cref{SFD0} established in \cite{LaWa21} for rather general  fragmentation coefficients $a$ and $b$. Moreover, we shall derive  some first qualitative properties for this solution under these general assumptions.

For a precise statement of the existence result we introduce the spaces
$$
X_m := L_1((0,\infty),x^m\mathrm{d}x)
$$ 
for $m\in\mathbb{R}$. 
For $f\in X_m$ and $m\in\mathbb{R}$, we define the moment $M_m(f)$ of order $m$ of $f$ by
\begin{equation*}
M_m(f) := \int_0^\infty x^m\ f(x)\ \mathrm{d}x\,.
\end{equation*}
Throughout the paper we assume that the overall fragmentation rate $a$ satisfies
\begin{equation}
	a\in L_{\infty,loc}([0,\infty))\,, \quad a> 0 \;\;\text{ a.e. in }\;\; (0,\infty)\,, \quad \liminf_{x\to\infty} a(x) \in (0,\infty]\,, \label{A1}
\end{equation}
while the daughter distribution function $b$ is a positive measurable function on $(0,\infty)^2$ satisfying
\begin{equation}
	\int_0^y x b(x,y)\ \mathrm{d}x = y\,, \qquad y\in (0,\infty)\,, \label{B1}
\end{equation}
and
\begin{equation}
	\delta_2 := \inf_{y>0} \left\{ 1 - \frac{1}{y^2} \int_0^y x^2 b(x,y)\ \mathrm{d}x  \right\} >0\,. \label{B2}
\end{equation}
These assumptions ensure in particular the existence and uniqueness of a solution to \eqref{SFD0}:

\begin{proposition}\label{Prop2}
Assume that the fragmentation coefficients $a$ and $b$ satisfy \eqref{A1}, \eqref{B1}, and \eqref{B2}. Then \Cref{SFD0} has a unique nonnegative solution $f\in C([0,\infty))\cap C^\infty((0,\infty)) $ satisfying
\begin{equation*}
	f \in \bigcap_{m>-1} X_m \,, \quad a f\in \bigcap_{m> -1} X_m\,, \quad f'' \in \bigcap_{m\ge 1} X_m
\end{equation*}
and $M_1(f)=1$. Moreover,
\begin{equation}\label{p9}
	\lim_{z\to\infty} f(z) = \lim_{z\to\infty} z f'(z) = 0\,. 
\end{equation}
\end{proposition}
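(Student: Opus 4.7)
The plan is to invoke \cite{LaWa21} for the baseline existence and uniqueness of a nonnegative solution $f \in C([0,\infty))$ to \eqref{SFD0} with $f(0)=0$ and the normalization $M_1(f)=1$, and then to upgrade this solution to satisfy the stronger qualitative properties stated.

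For the regularity, I would rewrite \eqref{SFD1} as
\begin{equation*}
f''(x) = a(x) f(x) - R(x)\,, \qquad R(x) := \int_x^\infty a(y) b(x,y) f(y)\, \mathrm{d}y\,,
\end{equation*}
and use $a \in L_{\infty,\mathrm{loc}}([0,\infty))$ together with local boundedness of $f$ to conclude that $f'' \in L_{\infty,\mathrm{loc}}((0,\infty))$, so $f \in C^1((0,\infty))$. The higher smoothness $f \in C^\infty((0,\infty))$ would then be obtained by a bootstrap on the ODE, differentiating $R$ under the integral. The negative-moment part $f \in X_m$ for $m \in (-1,0]$ is immediate from the continuity of $f$ at zero with $f(0)=0$, once we know decay at infinity.

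The heart of the proof is the propagation of arbitrarily high positive moments. I would test \eqref{SFD1} against a truncated weight $x^m \chi_N(x)$, perform two integrations by parts, and use Fubini on the right-hand side; passing to the limit $N\to\infty$ (aided by the uniform lower bound $\liminf_{x\to\infty} a(x) > 0$ from \eqref{A1} to control boundary terms) yields
\begin{equation*}
-\bigl[x^m f'(x)\bigr]_0^\infty + m\bigl[x^{m-1} f(x)\bigr]_0^\infty - m(m-1) M_{m-2}(f) + M_m(af) = \int_0^\infty a(y) f(y) B_m(y)\, \mathrm{d}y\,,
\end{equation*}
where $B_m(y) := \int_0^y x^m b(x,y)\, \mathrm{d}x$. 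For $m=2$, hypothesis \eqref{B2} gives $B_2(y) \le (1-\delta_2) y^2$, and so, once the boundary contributions are tamed, one obtains $\delta_2 M_2(af) \le 2 M_0(f)$, proving $M_2(af) < \infty$ and hence $M_2(f) < \infty$ via \eqref{A1}. An induction on $m \ge 2$, exploiting $B_m(y) \le y^{m-2} B_2(y) \le (1-\delta_2) y^m$ together with the tail lower bound on $a$, extends this to all positive moments of $af$ and $f$. The claim $f'' \in X_m$ for $m \ge 1$ follows directly from the ODE since both $af$ and $R$ lie in every $X_m$.

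Finally, for the decay \eqref{p9}: the moment bound $f \in \bigcap_{m} X_m$ combined with the local uniform bound on $f'$ (provided by $f''$ being locally bounded) forces $f(z)\to 0$ as $z\to\infty$ by a standard interpolation/contradiction argument. To obtain $zf'(z)\to 0$, I would integrate \eqref{SFD1} over $(x,\infty)$ to write $f'(x)$ as a difference of tail integrals involving $af$ and $R$, multiply by $x$, and use the tail control on $M_m(af)$ for large $m$ to conclude. The main obstacle I expect is the careful justification of the vanishing of the boundary terms at infinity in the moment identities, since these terms seemingly prejudge the very decay we aim to establish; resolving this requires the truncated-weight approach above, deriving estimates that are uniform in the truncation parameter $N$ before passing to the limit.
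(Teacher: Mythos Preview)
Your proposal takes a genuinely different route from the paper. The paper's proof is almost entirely by citation: existence and uniqueness of $f\in C([0,\infty))\cap C^\infty((0,\infty))$, the membership of $f$, $af$, and $f''$ in $X_m$ for all $m\ge 1$, and the decay \eqref{p9} are all imported directly from \cite[Theorem~1.5, Proposition~1.6 \& Lemma~2.1]{LaWa21}. The only thing actually proved in the present paper is the extension to $m\in(-1,1)$: $f\in X_m$ follows from \cite[Lemma~2.1]{LaWa21} (using the already known integrability of $f$ and $f''$), and $af\in X_m$ from the elementary splitting $M_m(af)\le \|a\|_{L_\infty(0,1)} M_m(f)+M_1(af)$.

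What you are sketching is, in effect, a reconstruction of the relevant content of \cite{LaWa21}. Your outline is reasonable, but the obstacle you flag at the end is real and is not resolved by the truncated-weight argument as you describe it. When you test against $x^m\chi_N$ and move $\chi_N$ through the fragmentation term, the inequality $\int_0^y x^m \chi_N(x) b(x,y)\,\mathrm{d}x \le (1-\delta_2) y^m \chi_N(y)$ fails for $y$ in the transition region of $\chi_N$ (since $\chi_N(x)\ge \chi_N(y)$ for $x\le y$), so you cannot absorb the right-hand side into the left uniformly in $N$. In \cite{LaWa21} this circularity is avoided because the moment bounds are obtained during the construction of the solution (via approximation by truncated problems with finite moments and uniform estimates), not a posteriori from the equation. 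A second loose end is the $C^\infty$ bootstrap: showing that $R(x)=\int_x^\infty a(y)b(x,y)f(y)\,\mathrm{d}y$ is locally bounded, let alone smooth, requires more than \eqref{B1}--\eqref{B2} and again relies on properties established in \cite{LaWa21}. In short, your plan is a plausible blueprint for what the cited reference does, but the paper itself simply invokes that reference rather than carrying out these steps.
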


\begin{proof}
The existence and uniqueness of a nonnegative solution $f\in C([0,\infty))\cap C^\infty((0,\infty)) $ to \eqref{SFD0} such that $f$, $f''$, and $af$ all belong to $X_m$ for any $m\ge 1$ and satisfying $M_1(f)=1$ and \eqref{p9} follow from \cite[Theorem~1.5, Proposition~1.6 \&  Lemma~2.1]{LaWa21}. 

Consider next $m\in (-1,1)$. The property $f\in X_m$ is a consequence of the already known integrability properties of $f$ and $f''$ according to \cite[Lemma~2.1]{LaWa21}. Moreover, owing to \eqref{A1},
\begin{equation*}
	M_{m}(af) \le \|a\|_{L_\infty(0,1)} M_m(f) + M_1(af) < \infty\,,
\end{equation*}
so that the function $af$ also belongs to $X_m$.
\end{proof}

 {\bf General assumption:} Throughout the remainder of this paper, we assume that the fragmentation coefficients $a$ and $b$ satisfy \eqref{A1}, \eqref{B1}, \eqref{B2} and  $f$ denotes the unique nonnegative solution $f\in C([0,\infty))\cap C^\infty((0,\infty))$ to \Cref{SFD0} with $M_1(f)=1$ provided in \Cref{Prop2}.\\

We  now derive from \eqref{SFD0} an alternative differential equation satisfied by $f$ which only involves the first derivative of $f$. The proof is based on a conservative formulation of the fragmentation operator as a first order derivative, see, e.g., \cite[Proposition~10.1.2]{BLL2020b}.

\begin{proposition}\label{Prop3}
	For $z\in (0,\infty)$, 
	\begin{equation}
		f(z) - z f'(z) = \int_z^\infty a(y) f(y) \int_0^z x b(x,y)\ \mathrm{d}x\mathrm{d}y\,. \label{AEQ1}
	\end{equation}	
Equivalently, for $z\in (0,\infty)$, 
\begin{equation}
	\frac{\mathrm{d}}{\mathrm{d}z}\left( \frac{f(z)}{z} \right) = - \frac{1}{z^2} \int_z^\infty a(y) f(y) \int_0^z x b(x,y)\ \mathrm{d}x\mathrm{d}y\,. \label{AEQ2}
\end{equation}
\end{proposition}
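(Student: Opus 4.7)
The plan is to derive \eqref{AEQ1} from \eqref{SFD1} by the standard device of multiplying the stationary equation by $x$ and integrating on $(0,z)$; identity \eqref{AEQ2} will then follow at once by dividing by $z^2$ and recognizing the derivative of $f/z$.

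More precisely, I would first multiply \eqref{SFD1} by $x$ and integrate over $(0,z)$, obtaining
\begin{equation*}
-\int_0^z x f''(x)\,\mathrm{d}x + \int_0^z x a(x) f(x)\,\mathrm{d}x = \int_0^z x \int_x^\infty a(y) b(x,y) f(y)\,\mathrm{d}y\,\mathrm{d}x.
\end{equation*}
The non-negativity of the integrand on the right and the integrability properties supplied by \Cref{Prop2} (in particular $af\in X_1$ together with \eqref{B1}, which gives $\int_0^y x b(x,y)\,\mathrm{d}x=y$) permit an application of Tonelli's theorem. Splitting the resulting $y$-integral at $z$ and using \eqref{B1} on the part where $y<z$ yields
\begin{equation*}
\int_0^z x \int_x^\infty a(y) b(x,y) f(y)\,\mathrm{d}y\,\mathrm{d}x = \int_0^z y a(y) f(y)\,\mathrm{d}y + \int_z^\infty a(y) f(y) \int_0^z x b(x,y)\,\mathrm{d}x\,\mathrm{d}y,
\end{equation*}
so that the first term on the right cancels the second term on the left.

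It remains to handle $-\int_0^z x f''(x)\,\mathrm{d}x$. Since $f''\in X_1$ by \Cref{Prop2}, this integral is well-defined, and an integration by parts combined with the boundary condition $f(0)=0$ from \eqref{SFD2} produces
\begin{equation*}
-\int_0^z x f''(x)\,\mathrm{d}x = -z f'(z) + \int_0^z f'(x)\,\mathrm{d}x = f(z) - z f'(z),
\end{equation*}
which is exactly the left-hand side of \eqref{AEQ1}. For the equivalence with \eqref{AEQ2}, I would simply observe that $\frac{\mathrm{d}}{\mathrm{d}z}(f(z)/z) = (z f'(z) - f(z))/z^2$ and divide \eqref{AEQ1} by $-z^2$.

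There is no real obstacle here; the only genuine verification is the applicability of Tonelli and the integrability of $x f''(x)$ on $(0,z)$, both of which are ensured by \Cref{Prop2}. The proof is therefore essentially a conservative reformulation of the fragmentation operator combined with one integration by parts.
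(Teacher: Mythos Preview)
Your approach is correct but takes a genuinely different route from the paper. The paper multiplies \eqref{SFD1} by $x\vartheta(x)$ with $\vartheta\in C_c^\infty(0,\infty)$, integrates over all of $(0,\infty)$, and after two applications of Fubini's theorem obtains \eqref{AEQ1} up to an additive constant $J$; it then identifies $J=0$ by letting $z\to\infty$ and invoking \eqref{p9} together with $af\in X_1$. You instead integrate directly on $(0,z)$ and handle the boundary at the left endpoint via $f(0)=0$, never touching the behavior at infinity. This is more elementary and arguably cleaner.

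There is, however, one point you pass over. In the integration by parts
\[
-\int_0^z x f''(x)\,\mathrm{d}x = -\big[xf'(x)\big]_0^z + \int_0^z f'(x)\,\mathrm{d}x
\]
you implicitly use $\lim_{x\to 0} x f'(x)=0$, which is not among the conclusions of \Cref{Prop2} (only the limits at infinity are recorded there). It does follow from the stated properties: since $(xf'(x))'=f'(x)+xf''(x)$ with $xf''\in L_1(0,1)$ and $\int_\epsilon^z f'(x)\,\mathrm{d}x=f(z)-f(\epsilon)\to f(z)$ by \eqref{SFD2}, the limit $L:=\lim_{x\to 0} x f'(x)$ exists; and $L\ne 0$ would force $f'(x)\sim L/x$, hence $|f(x_0)-f(\epsilon)|\gtrsim |L|\,|\ln\epsilon|\to\infty$, contradicting the continuity of $f$ at $0$. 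With this remark added your argument is complete. The paper's test-function route simply trades this small boundary check at $0$ for the already-established asymptotics at infinity.
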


\begin{proof}
We sketch the proof for the sake of completeness. Consider $\vartheta\in C_c^\infty(0,\infty)$. We multiply \eqref{SFD1} by $x \vartheta(x)$ and integrate the resulting identity over $(0,\infty)$. Since $af\in X_1$, we may apply Fubini's theorem to obtain
\begin{equation}
	- \int_0^\infty x \vartheta(x) f''(x)\ \mathrm{d}x = \int_0^\infty a(y) f(y) \int_0^y x [\vartheta(x)-\vartheta(y)] b(x,y)\ \mathrm{d}x\mathrm{d}y\,. \label{P1}
\end{equation}
On the one hand, integrating by parts the term on the left-hand side of \eqref{P1}, we find
\begin{align*}
	- \int_0^\infty x \vartheta(x) f''(x)\ \mathrm{d}x & = \int_0^\infty (x \vartheta'(x) + \vartheta(x)) f'(x)\ \mathrm{d}x \\
	& = \int_0^\infty (x f'(x) - f(x)) \vartheta'(x)\ \mathrm{d}x\,.
\end{align*}
On the other hand, using Fubini's theorem again, along with \eqref{B1}, gives
\begin{align*}
	& \int_0^\infty a(y) f(y) \int_0^y x [\vartheta(x)-\vartheta(y)] b(x,y)\ \mathrm{d}x\mathrm{d}y \\
	& \hspace{1cm} = - \int_0^\infty a(y) f(y) \int_0^y x  b(x,y) \int_x^y \vartheta'(z)\ \mathrm{d}z\mathrm{d}x\mathrm{d}y \\
	& \hspace{1cm} = - \int_0^\infty \vartheta'(z) \int_z^\infty a(y) f(y) \int_0^z x b(x,y)\ \mathrm{d}x\mathrm{d}y\mathrm{d}z\,.
\end{align*}
Collecting the above identities leads us to the formula
\begin{equation*}
	\int_0^\infty (z f'(z) - f(z)) \vartheta'(z)\ \mathrm{d}z = - \int_0^\infty \vartheta'(z) \int_z^\infty a(y) f(y) \int_0^z x b(x,y)\ \mathrm{d}x\mathrm{d}y\mathrm{d}z\,,
\end{equation*}
from which we deduce that there is $J\in\mathbb{R}$ such that
\begin{equation}
	z f'(z) - f(z) + \int_z^\infty a(y) f(y) \int_0^z x b(x,y)\ \mathrm{d}x\mathrm{d}y = J\,, \qquad z\in (0,\infty)\,. \label{P2}
\end{equation}
It now follows from \eqref{B1} and the property $af \in X_1$ that
\begin{equation*}
	0 \le \lim_{z\to\infty} \int_z^\infty a(y) f(y) \int_0^z x b(x,y)\ \mathrm{d}x\mathrm{d}y \le \lim_{z\to\infty} \int_z^\infty y a(y) f(y)\mathrm{d}y = 0\,.
\end{equation*}
 Recalling \eqref{p9} we may then take the limit $z\to\infty$ in \eqref{P2} and deduce that $J=0$, thereby completing the proof of \eqref{AEQ1}. Finally, \eqref{AEQ2} is a straightforward consequence of \eqref{AEQ1}.
\end{proof}

We next turn to some monotonicity and positivity properties of $f$.

\begin{proposition}\label{Prop4}
	The function $f$ is positive in $(0,\infty)$ and $z\mapsto f(z)/z$ is decreasing on $(0,\infty)$.
\end{proposition}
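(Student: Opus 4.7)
The plan is to handle the two assertions separately: first the monotonicity of $z\mapsto f(z)/z$, which is a direct consequence of \Cref{Prop3}, and then the strict positivity of $f$, which requires a one-dimensional strong minimum principle argument.

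For the monotonicity, formula \eqref{AEQ2} in \Cref{Prop3} already expresses $(f(z)/z)'$ as $-z^{-2}$ times a double integral whose integrand is nonnegative by \Cref{Prop2} ($f\ge 0$), \eqref{A1} ($a\ge 0$), and the positivity of $b$. Hence $(f/z)' \le 0$ on $(0,\infty)$, so $z\mapsto f(z)/z$ is non-increasing. Strict decrease will then follow once positivity of $f$ is proved, since the integrand in \eqref{AEQ2} becomes strictly positive on a set of positive $y$-measure (using $a>0$ a.e. and $b>0$).

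For positivity I would argue by contradiction. Rewriting \eqref{SFD1} as
\[
-f''(x) + a(x)f(x) = \int_x^\infty a(y)b(x,y)f(y)\,\mathrm{d}y \ge 0
\]
exhibits $f$ as a nonnegative classical supersolution of the Sturm--Liouville operator $Lu := -u'' + a u$ on $(0,\infty)$, whose coefficient $a$ is nonnegative and locally bounded by \eqref{A1}. Suppose $f(x_0) = 0$ for some $x_0\in(0,\infty)$. Since $f\in C^\infty((0,\infty))$ and $f \ge 0$ attains its minimum at the interior point $x_0$, one has $f'(x_0)=0$. On any compact neighbourhood $[x_0-\delta, x_0+\delta]\subset (0,\infty)$ on which $a \le M$, the bound $f'' \le a f \le Mf$, combined with Taylor's formula $f(x) = \int_{x_0}^x (x-s)f''(s)\,\mathrm{d}s$ and the trivial estimate $\sup_{[x_0,x]} f \le \tfrac{M(x-x_0)^2}{2}\sup_{[x_0,x]} f$, iterates to give $f\equiv 0$ on a full neighbourhood of $x_0$. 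Consequently $\{x\in(0,\infty):f(x)=0\}$ is open (by this local argument) and closed (by continuity) in the connected set $(0,\infty)$, hence equals $(0,\infty)$, contradicting $M_1(f)=1$.

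The only slightly delicate point is the local propagation of a zero of $f$ across a neighbourhood, i.e.\ the strong minimum principle for $L$ with $L^\infty_{\mathrm{loc}}$ coefficient. This is classical and constitutes the main obstacle; every remaining step is an immediate consequence of \Cref{Prop2} and \Cref{Prop3}.
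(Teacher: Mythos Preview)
Your argument is correct, and the monotonicity portion coincides with the paper's. For positivity, however, the paper takes a different path: it lets $z_0$ be the infimum of the zero set of $f$ in $(0,\infty)$, uses the monotonicity of $f/z$ from \eqref{AEQ2} to conclude $f\equiv 0$ on $[z_0,\infty)$ (hence $f'(z_0)=0$), and then applies Hopf's boundary lemma to $u:=-f$ on $(0,z_0)$, obtaining $f'(z_0)<0$, a contradiction. Your route instead avoids Hopf's lemma altogether: you use only the differential inequality $f''\le a f$ together with a local Taylor/Gronwall bound to propagate an interior zero, and then a connectedness argument to force $f\equiv 0$, contradicting $M_1(f)=1$. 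Your approach is a bit more self-contained (no boundary point lemma, no appeal to the special structure of \eqref{AEQ2} beyond non-increase of $f/z$), while the paper's exploits the integral reformulation more directly and localises the contradiction at a single boundary point. Both are short and classical; either is perfectly acceptable here.
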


\begin{proof}
	Set $z_0 := \inf\{ z\in (0,\infty)\ :\ f(z)=0\}$ and assume for contradiction that $z_0>0$. Then the continuity of $f$ implies that $f(z_0)=0$ and we infer from \eqref{AEQ2} that $f(z)=0$ for $z\ge z_0$. On the one hand, since $f\in C^1(0,\infty)$, we deduce that $f'(z_0)=0$. On the other hand, $u:=-f\in C([0,z_0])$ satisfies 
	\begin{align*}
		& u'' - a u \ge 0 \;\;\text{ in }\;\; (0,z_0)\,, \quad u(0)=u(z_0)=0\,, \\
		& u(z_0) > u(z)\,, \qquad z\in (0,z_0)\,.
	\end{align*}
Hopf's boundary lemma, see \cite[Lemma~3.4]{GiTr2001}, then implies the contradiction $-f'(z_0)=u'(z_0)>0$. Consequently, $f>0$ in $(0,\infty)$, from which the strict monotonicity of $z\mapsto f(z)/z$ follows  due to~\eqref{AEQ2}.
\end{proof}

Bearing in mind that \Cref{Prop3} ensures that $f$ belongs to $X_m$ for all $m>-1$, we next investigate more precisely the small size behavior of $f$ and first report the following identities.

\begin{lemma}\label{Lem5}
	Let $\theta\in [0,1]$. For $\xi>0$, define
	\begin{align*}
		\mathcal{I}_\theta(\xi) & := \int_\xi^\infty a(y) f(y) \int_0^y \left( \frac{\max\{x,\xi\}^{\theta-1} - y^{\theta-1}}{1-\theta} \right) x b(x,y)\, \mathrm{d}x\mathrm{d}y \ge 0 \;\;\text{ for }\;\; \theta\in [0,1)\,, \\
		\mathcal{I}_1(\xi) & := \int_\xi^\infty a(y) f(y) \int_0^y \ln{\left( \frac{y}{\max\{x,\xi\}} \right)} x b(x,y) \, \mathrm{d}x\mathrm{d}y\ge 0\,.
	\end{align*} 
Then $\mathcal{I}_\theta$ is non-increasing on $(0,\infty)$ and
\begin{equation}
	\xi^{\theta-1} f(\xi) + \theta \int_\xi^\infty z^{\theta-2} f(z)\ \mathrm{d}z = \mathcal{I}_\theta(\xi)\,, \qquad \xi>0\,. \label{In4}
\end{equation}
\end{lemma}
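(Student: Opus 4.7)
The plan is to take identity \eqref{AEQ1} from \Cref{Prop3}, multiply both sides by $z^{\theta-2}$, and integrate over $(\xi,\infty)$; reading each side of the resulting identity in two different ways will produce \eqref{In4}.

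First, on the left I would exploit the product-rule identity
\[
z^{\theta-2}\bigl(f(z)-zf'(z)\bigr) = -\frac{\mathrm{d}}{\mathrm{d}z}\bigl(z^{\theta-1} f(z)\bigr) + \theta z^{\theta-2} f(z),
\]
which is immediate from expanding the derivative on the right. Integration over $(\xi,\infty)$ then yields exactly $\xi^{\theta-1} f(\xi) + \theta\int_\xi^\infty z^{\theta-2} f(z)\,\mathrm{d}z$, provided that $\lim_{z\to\infty} z^{\theta-1}f(z)=0$. The latter is guaranteed by \eqref{p9} since $\theta-1\le 0$, and the remaining moment-like integral is finite because $f\in X_m$ for every $m>-1$ by \Cref{Prop2} and $\xi>0$.

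Second, on the right I would substitute \eqref{AEQ1} to rewrite $\int_\xi^\infty z^{\theta-2}(f(z)-zf'(z))\,\mathrm{d}z$ as a triple integral and apply Fubini's theorem. Swapping the order of integration on the region $\{(x,z,y)\colon 0\le x\le z\le y,\ z\ge\xi\}$, the variable $z$ ranges over $[\max\{x,\xi\},y]$ for fixed $y>\xi$ and $x\in(0,y)$. Computing $\int_{\max\{x,\xi\}}^{y} z^{\theta-2}\,\mathrm{d}z$ produces the kernel $(\max\{x,\xi\}^{\theta-1}-y^{\theta-1})/(1-\theta)$ when $\theta\in[0,1)$ and $\ln(y/\max\{x,\xi\})$ when $\theta=1$, which is precisely $\mathcal{I}_\theta(\xi)$. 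Combining the two computations gives \eqref{In4}. The monotonicity of $\mathcal{I}_\theta$ on $(0,\infty)$ then follows by inspection: both $\max\{x,\xi\}^{\theta-1}$ (for $\theta<1$) and $\ln(y/\max\{x,\xi\})$ are pointwise non-increasing in $\xi$, the integrand is non-negative since $\max\{x,\xi\}\le y$, and the outer integration domain $(\xi,\infty)$ shrinks as $\xi$ grows.

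The main technical point is justifying the Fubini exchange, which, since all integrands are non-negative, reduces to showing that $\mathcal{I}_\theta(\xi)$ is finite. For $\theta\in[0,1)$, the bound $\max\{x,\xi\}^{\theta-1}\le\xi^{\theta-1}$ together with \eqref{B1} gives an inner integral dominated by $\xi^{\theta-1}y$, so that $\mathcal{I}_\theta(\xi)\le \xi^{\theta-1} M_1(af)$, which is finite by \Cref{Prop2}. For $\theta=1$, the estimate $\ln(y/\max\{x,\xi\})\le \ln(y/\xi)$ for $y\ge\xi$ combined with $af\in X_m$ for every $m\ge 1$ again produces a finite upper bound. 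With these integrability estimates in hand, the remainder of the argument is a careful bookkeeping exercise.
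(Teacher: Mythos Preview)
Your proof is correct and follows essentially the same route as the paper: the paper multiplies \eqref{AEQ2} by $z^\theta$ and integrates over $(\xi,\infty)$, which is exactly your operation of multiplying \eqref{AEQ1} by $z^{\theta-2}$, and both the integration-by-parts on the left and the Fubini--Tonelli exchange on the right are carried out identically. One small cosmetic slip: your bound on the inner integral for $\theta\in[0,1)$ should be $\xi^{\theta-1}y/(1-\theta)$ rather than $\xi^{\theta-1}y$, but this does not affect the argument.
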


\begin{proof}
On the one hand, it follows from \eqref{p9} that
\begin{align*}
	- \int_\xi^\infty z^{\theta} \frac{\mathrm{d}}{\mathrm{d}z}\left( \frac{f(z)}{z} \right)\ \mathrm{d}z & = - \bigg[ z^{\theta-1} f(z) \bigg]_{z=\xi}^{z=\infty} + \theta  \int_\xi^\infty z^{\theta-2} f(z)\ \mathrm{d}z \\
	& = \xi^{\theta-1} f(\xi) + \theta  \int_\xi^\infty z^{\theta-2} f(z)\ \mathrm{d}z\,.
\end{align*}
On the other hand, by Fubini-Tonelli's theorem,
\begin{align*}
	& \int_\xi^\infty z^{\theta-2} \int_z^\infty a(y) f(y) \int_0^z x b(x,y)\ \mathrm{d}x\mathrm{d}y\mathrm{d}z \\
	& \hspace{2cm} = \int_\xi^\infty a(y) f(y) \int_\xi^y z^{\theta-2} \int_0^z x b(x,y)\ \mathrm{d}x\mathrm{d}z\mathrm{d}y \\
	& \hspace{2cm} = \int_\xi^\infty a(y) f(y) \int_0^y x b(x,y) \int_{\max\{x,\xi\}}^y z^{\theta-2}\ \mathrm{d}z\mathrm{d}x\mathrm{d}y \\
	& \hspace{2cm} = \mathcal{I}_\theta(\xi)\,.
\end{align*}
Identity \eqref{In4} is now a straightforward consequence of \eqref{AEQ2} and the above two formulas. 

We finally note for $y\ge \xi_2>\xi_1>0$ and $\theta\in [0,1)$ that
\begin{equation*}
	\int_0^y \left( \frac{\max\{x,\xi_1\}^{\theta-1} - y^{\theta-1}}{1-\theta} \right) x b(x,y)\ \mathrm{d}x\ge \int_0^y \left( \frac{\max\{x,\xi_2\}^{\theta-1} - y^{\theta-1}}{1-\theta} \right) x b(x,y)\ \mathrm{d}x \ge 0
\end{equation*}
since $b$ is nonnegative.
Hence, after integrating the above inequality with respect to $y$ over $(\xi_2,\infty)$ and using the nonnegativity of $a$ and $f$,
\begin{align*}
	\mathcal{I}_\theta(\xi_1) \ge \int_{\xi_2}^\infty a(y) f(y) \int_0^y \left( \frac{\max\{x,\xi_1\}^{\theta-1} - y^{\theta-1}}{1-\theta} \right) x b(x,y)\ \mathrm{d}x\mathrm{d}y \ge \mathcal{I}_\theta(\xi_2)\ge 0\,,
\end{align*}
as claimed. A similar argument gives the monotonicity of $\mathcal{I}_1$. 
\end{proof}

A first consequence of \Cref{Lem5} is that the integrability properties of $f$ near zero stated in \Cref{Prop2} cannot be improved in general and that $f$ does not necessarily belong to $X_{-1}$.  We actually derive a necessary and sufficient condition for $f$ to belong to  $X_{-1}$. A similar result is available for self-similar solutions to the fragmentation equation without size diffusion, see \cite{BiTK2018} and \cite[Proposition~10.1.3]{BLL2020b}.

\begin{lemma}\label{Lem6}
The function $f$ belongs to $X_{-1}$ if and only if 
\begin{equation}
	\big[(x,y)\mapsto a(y) f(y) x b(x,y) [\ln{(y/x)}]_+ \big] \in L^1((0,\infty)^2)\,. \label{In5}
\end{equation}
More precisely,
\begin{equation*}
	M_{-1}(f) = \int_0^\infty a(y) f(y) \int_0^y x b(x,y) \ln{\left( \frac{y}{x} \right)}\ \mathrm{d}x\mathrm{d}y\,. 
\end{equation*}
\end{lemma}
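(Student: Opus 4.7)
The plan is to derive the identity directly from \Cref{Lem5} applied with $\theta=1$ by passing to the limit $\xi \to 0^+$. Recall that \eqref{In4} with $\theta=1$ reads
\begin{equation*}
	f(\xi) + \int_\xi^\infty \frac{f(z)}{z}\ \mathrm{d}z = \mathcal{I}_1(\xi)\,, \qquad \xi > 0\,,
\end{equation*}
which is a pointwise identity in $\xi$ involving only nonnegative quantities (recall that $\mathcal{I}_1\ge 0$ and $f>0$ on $(0,\infty)$ by \Cref{Prop4}). The strategy is then to take $\xi \to 0^+$ and handle each term by monotone convergence.

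First I would note that, since $f\in C([0,\infty))$ with $f(0)=0$ by \eqref{SFD2}, one has $f(\xi)\to 0$ as $\xi\to 0^+$. Next, by the monotone convergence theorem applied to the nonnegative function $z\mapsto f(z)/z$ on $(0,\infty)$,
\begin{equation*}
	\lim_{\xi\to 0^+} \int_\xi^\infty \frac{f(z)}{z}\ \mathrm{d}z = \int_0^\infty \frac{f(z)}{z}\ \mathrm{d}z = M_{-1}(f) \in [0,\infty]\,.
\end{equation*}
For the right-hand side, observe that, for fixed $0<x<y$, the map $\xi\mapsto \ln(y/\max\{x,\xi\})\mathbf{1}_{(0,y)}(\xi)$ is non-decreasing as $\xi$ decreases to $0$, with pointwise limit $\ln(y/x)>0$; since the integrand $a(y)f(y) x b(x,y) \ln(y/\max\{x,\xi\})$ is nonnegative, a second application of monotone convergence yields
\begin{equation*}
	\lim_{\xi\to 0^+} \mathcal{I}_1(\xi) = \int_0^\infty a(y) f(y) \int_0^y x b(x,y) \ln{\left( \frac{y}{x} \right)}\ \mathrm{d}x\mathrm{d}y \in [0,\infty]\,.
\end{equation*}

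Combining the three limits then produces the asserted identity for $M_{-1}(f)$ as an equality in $[0,\infty]$, from which the equivalence with \eqref{In5} follows: since $\ln(y/x) = [\ln(y/x)]_+$ on the set $\{0<x<y\}$ and the integrand is nonnegative, finiteness of $M_{-1}(f)$ is equivalent to the integrability condition \eqref{In5}. I do not expect a serious obstacle here: the whole argument is a careful passage to the limit based on monotonicity, and the key technical input, namely the representation \eqref{In4} for $\theta=1$, is already supplied by \Cref{Lem5}. The only minor subtlety to check is that the monotone-convergence argument for $\mathcal{I}_1$ is justified even though the domain of $(x,y)$-integration also varies with $\xi$, which is handled by rewriting $\mathcal{I}_1(\xi)$ as an integral over $(0,\infty)^2$ against an indicator function and exploiting pointwise monotonicity in $\xi$.
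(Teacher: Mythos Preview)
Your proof is correct and follows the same line as the paper's: both start from \eqref{In4} with $\theta=1$ and pass to the limit $\xi\to 0^+$. The only minor difference is that the paper treats the two implications separately, using dominated convergence for the direction \eqref{In5} $\Rightarrow$ $f\in X_{-1}$ and Fatou's lemma for the converse, whereas you handle both at once via monotone convergence to obtain the identity directly in $[0,\infty]$; your unified passage to the limit is a slight streamlining of the same argument.
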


\begin{proof}
Let $\xi>0$. According to \eqref{In4} (with $\theta=1$), 
	\begin{equation}\label{2}
		f(\xi) + \int_\xi^\infty \frac{f(z)}{z} \ \mathrm{d}z = \mathcal{I}_1(\xi) = \int_\xi^\infty a(y) f(y) \int_0^y x b(x,y) \ln{\left( \frac{y}{\max\{x,\xi\}} \right)}\ \mathrm{d}x\mathrm{d}y\,.
	\end{equation}
Assume first \eqref{In5}. Since 
\begin{equation*}
	a(y) f(y) \int_0^y x b(x,y) \ln{\left( \frac{y}{\max\{x,\xi\}} \right)}\ \mathrm{d}x \le a(y) f(y) \int_0^y x b(x,y) \ln{\left( \frac{y}{x} \right)}\ \mathrm{d}x\,,
\end{equation*}
we infer from \eqref{In5} and Lebesgue's convergence theorem that
\begin{equation*}
	\lim_{\xi\to 0} \mathcal{I}_1(\xi) = \int_0^\infty a(y) f(y) \int_0^y x b(x,y) \ln{\left( \frac{y}{x} \right)}\ \mathrm{d}x\mathrm{d}y\,.
\end{equation*}
Recalling the boundary condition \eqref{SFD2}, we are then in a position to take the limit $\xi\to 0$ in \eqref{2} and conclude that $f\in X_{-1}$ and satisfies the identity stated in \Cref{Lem6}.

Conversely, assuming that $f\in X_{-1}$, we infer from \eqref{SFD2}, \eqref{2}, and the monotonicity of $\mathcal{I}_1$ that
\begin{equation*}
	\sup_{\xi>0} \mathcal{I}_1(\xi) = M_{-1}(f)<\infty\,.
\end{equation*}
Fatou's lemma then entails that 
\begin{equation*}
	\int_0^\infty a(y) f(y) \int_0^y x b(x,y) \ln{\left( \frac{y}{x} \right)}\ \mathrm{d}x\mathrm{d}y \le \liminf_{\xi\to 0} \mathcal{I}_1(\xi) \le M_{-1}(f)\,,
\end{equation*}
from which \eqref{In5} follows.
\end{proof}

In particular, \Cref{Lem6} implies that $f$ cannot vanish algebraically at zero when \eqref{In5} is not satisfied, see \Cref{Ex32} below where this is made explicit.

Another consequence of \Cref{Lem5} is that $f$ cannot vanish faster than linearly at zero, so that $f\not\in X_{-2}$.

\begin{lemma}\label{Lem7}
	The function $f$ does not belong to $X_{-2}$. More precisely,
	\begin{equation*}
		\lim_{z\to 0} \frac{f(z)}{z} = \sup_{\xi>0} \mathcal{I}_0(\xi) \in (0,\infty]\,.
	\end{equation*}
\end{lemma}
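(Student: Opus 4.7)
The plan is to evaluate the identity \eqref{In4} of Lemma~\ref{Lem5} at the boundary value $\theta=0$. Since the factor $\theta$ in front of the integral term on the left-hand side of \eqref{In4} then vanishes (and the integral $\int_\xi^\infty z^{-2}f(z)\,\mathrm{d}z$ is finite for each $\xi>0$ by Proposition~\ref{Prop2}), the identity reduces to
\begin{equation*}
\frac{f(\xi)}{\xi} = \mathcal{I}_0(\xi)\,, \qquad \xi>0\,.
\end{equation*}
Lemma~\ref{Lem5} further guarantees that $\mathcal{I}_0$ is non-increasing on $(0,\infty)$, so that the limit $\lim_{\xi\to 0}\mathcal{I}_0(\xi)$ exists in $[0,\infty]$ and coincides with $\sup_{\xi>0}\mathcal{I}_0(\xi)$. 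Translating this through the displayed identity yields the formula for $\lim_{z\to 0} f(z)/z$ announced in the statement.

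To upgrade the supremum from $[0,\infty]$ to $(0,\infty]$, I would invoke the positivity of $f$ on $(0,\infty)$ established in Proposition~\ref{Prop4}: fixing any $\xi_0>0$, the above identity gives $\mathcal{I}_0(\xi_0)=f(\xi_0)/\xi_0>0$, so the supremum is strictly positive.

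Finally, the non-integrability $f\notin X_{-2}$ is then an immediate consequence of the monotonicity of $z\mapsto f(z)/z$ (again from Proposition~\ref{Prop4}): with $c:=f(\xi_0)/\xi_0>0$, one has $f(z)/z \ge c$ for $z\in (0,\xi_0]$, hence
\begin{equation*}
\int_0^{\xi_0} z^{-2} f(z)\,\mathrm{d}z \ge c \int_0^{\xi_0} z^{-1}\,\mathrm{d}z = \infty\,.
\end{equation*}

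There is no genuine obstacle in this argument; the lemma is essentially a corollary of Lemma~\ref{Lem5} at the endpoint $\theta=0$, combined with the positivity from Proposition~\ref{Prop4}. The only subtle point is noticing that the $\theta$-prefactor kills the second term on the left-hand side of \eqref{In4}, which is what identifies the supremum of $\mathcal{I}_0$ directly with the limit of $f(\xi)/\xi$.
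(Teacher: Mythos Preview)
Your proof is correct and follows essentially the same approach as the paper: both specialize Lemma~\ref{Lem5} to $\theta=0$ to get $f(\xi)/\xi=\mathcal{I}_0(\xi)$, then use the monotonicity of $\mathcal{I}_0$ together with the positivity from Proposition~\ref{Prop4}. Your write-up is slightly more explicit than the paper's (you spell out why the $\theta$-prefactor kills the integral term and give a separate argument for $f\notin X_{-2}$), but the ideas are identical.
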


\begin{proof}
Let $\xi>0$. In view of \Cref{Lem5} (with $\theta=0$),
\begin{equation*}
	\frac{f(\xi)}{\xi} = \mathcal{I}_0(\xi) = \int_\xi^\infty a(y) f(y) \int_0^y \left( \frac{1}{\max\{x,\xi\}} - \frac{1}{y} \right) x b(x,y)\ \mathrm{d}x\mathrm{d}y\,,
\end{equation*}
so that
\begin{equation*}
	\lim_{\xi\to 0} \frac{f(\xi)}{\xi} = \sup_{\xi>0} \mathcal{I}_0(\xi) \in (0,\infty]\,,
\end{equation*}
thanks to the monotonicity of $\mathcal{I}_0$ and the positivity of $a$, $b$, and $f$,  the latter being due to Proposition~\ref{Prop4}.
\end{proof}

\section{Small Size Behavior}\label{Sec3}

We next investigate in more detail the qualitative behavior of the solution $f$ to \Cref{SFD0} for small sizes.  It turns out to be determined predominantly by the daughter distribution function $b$ and requires no further qualitative properties of the fragmentation rate $a$.
In the first part we consider a general distribution function $b$ and subsequently assume a distribution function of self-similar form~\eqref{In3}.

\subsection{Small Size Behavior: General Daughter Distribution Functions}\label{Sec31}

Building upon the outcome of \Cref{Lem7}, we first provide a sufficient condition on $b$,  stating that fragmentation produces a finite number of daughter particles, which guarantees that $f$ behaves linearly at zero. 

\begin{proposition}\label{Prop31}
Assume that 
\begin{equation}
	N_0 := \sup_{y>0} \int_0^y b(x,y)\ \mathrm{d}x < \infty\,. \label{SB1}
\end{equation}
Then
\begin{equation*}
\ell_0 := \int_0^\infty a(y) f(y) \int_0^y \left( \frac{1}{x} - \frac{1}{y} \right) x b(x,y)\ \mathrm{d}x\mathrm{d}y = \sup_{\xi>0}\mathcal{I}_0(\xi) < \infty
\end{equation*}
and	$f(z) \sim \ell_0 z$ as $z \to 0$.
\end{proposition}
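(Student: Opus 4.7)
The plan is to show that, under the finiteness assumption \eqref{SB1}, the quantity $\ell_0$ is finite, coincides with $\sup_{\xi>0}\mathcal{I}_0(\xi)$, and then invoke \Cref{Lem7} to conclude.

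First I would bound $\ell_0$ using \eqref{SB1}. Since $0 \le 1 - x/y \le 1$ for $x\in (0,y)$, we have
\begin{equation*}
\int_0^y \left( \frac{1}{x} - \frac{1}{y} \right) x b(x,y)\ \mathrm{d}x = \int_0^y \left( 1 - \frac{x}{y}\right) b(x,y)\ \mathrm{d}x \le \int_0^y b(x,y)\ \mathrm{d}x \le N_0\,,
\end{equation*}
so that $\ell_0 \le N_0 M_0(af)$. The finiteness of $M_0(af)$ is guaranteed by \Cref{Prop2}, since $af \in X_m$ for all $m > -1$ and in particular for $m=0$. Hence $\ell_0 < \infty$.

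Next I would identify $\ell_0$ with $\sup_{\xi>0}\mathcal{I}_0(\xi)$ via monotone convergence. Writing
\begin{equation*}
\mathcal{I}_0(\xi) = \int_\xi^\infty a(y) f(y) \int_0^y \left( \frac{1}{\max\{x,\xi\}} - \frac{1}{y} \right) x b(x,y)\ \mathrm{d}x\mathrm{d}y\,,
\end{equation*}
one notes that the nonnegative integrand $\left( \frac{1}{\max\{x,\xi\}} - \frac{1}{y} \right) x b(x,y) \mathbf{1}_{\{y>\xi\}}\mathbf{1}_{\{0<x<y\}}$ is monotone non-decreasing as $\xi \downarrow 0$ and converges pointwise to $\left( \frac{1}{x} - \frac{1}{y} \right) x b(x,y) \mathbf{1}_{\{0<x<y\}}$. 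The monotone convergence theorem then yields
\begin{equation*}
\sup_{\xi>0} \mathcal{I}_0(\xi) = \lim_{\xi\to 0} \mathcal{I}_0(\xi) = \ell_0\,,
\end{equation*}
consistently with the non-increasing character of $\mathcal{I}_0$ established in \Cref{Lem5}.

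Finally, invoking \Cref{Lem7} gives
\begin{equation*}
\lim_{z\to 0} \frac{f(z)}{z} = \sup_{\xi>0} \mathcal{I}_0(\xi) = \ell_0\,,
\end{equation*}
and since $\ell_0 \in (0,\infty)$ (positivity coming from $a,b,f > 0$ and $1-x/y>0$ on a set of positive measure, cf. \Cref{Prop4}), one concludes $f(z) \sim \ell_0 z$ as $z\to 0$. There is no real obstacle here: the only point requiring care is making the monotone convergence step clean, which hinges on recognizing the right monotonicity in $\xi$ of the integrand appearing in $\mathcal{I}_0$.
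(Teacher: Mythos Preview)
Your proof is correct and follows essentially the same route as the paper: bound $\ell_0$ using $af\in X_0$ and \eqref{SB1}, pass to the limit $\xi\to 0$ in $\mathcal{I}_0(\xi)$ via a convergence theorem, and invoke \Cref{Lem7}. The only cosmetic difference is that the paper appeals to Lebesgue's dominated convergence theorem (with dominating function $N_0\, a(y)f(y)$) rather than monotone convergence, but both are valid here.
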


\begin{proof}
By \Cref{Prop2}, the function $af$ belongs to $X_0=L_1(0,\infty)$ and we infer from \eqref{SB1} that $\ell_0$ is finite. Moreover, Lebesgue's convergence theorem ensures that
\begin{equation*}
	\ell_0 = \lim_{\xi\to 0} \mathcal{I}_0(\xi) = \sup_{\xi>0}\mathcal{I}_0(\xi) \,.
\end{equation*}
\Cref{Prop31} then readily follows from \Cref{Lem7}.
\end{proof}

Due to the fact that the left-hand side of \eqref{In4} involves two positive terms when $\theta\in (0,1)$, it is less obvious to derive properties on $f$ from \Cref{Lem5}  for such exponents. Nevertheless, an assumption in the spirit of~\eqref{SB1} allows us to get the following information on the small size behavior of $f$:

\begin{proposition}\label{Prop32}
Assume that there is $m\in (0,1)$ such that
\begin{equation}
	N_m := \sup_{y>0} \left\{ \frac{1}{y^m} \int_0^y x^m b(x,y)\ \mathrm{d}x \right\} < \infty\,. \label{SB2}
\end{equation}
Then $f\in X_{m-2}$ with
\begin{equation*}
	m(1-m) M_{m-2}(f) = \int_0^\infty a(y) f(y) \int_0^y x \left( x^{m-1} - y^{m-1} \right) b(x,y)\ \mathrm{d}x\mathrm{d}y
\end{equation*}
and
\begin{equation*}
	\lim_{z\to 0} z^{m-1} f(z) = 0\,. 
\end{equation*}
\end{proposition}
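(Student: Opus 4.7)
My plan is to combine the identity from \Cref{Lem5} with $\theta = m \in (0,1)$ with the moment hypothesis \eqref{SB2} to simultaneously derive integrability, the stated identity and the vanishing limit.

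First I would apply \Cref{Lem5} with $\theta=m$, obtaining
$$\xi^{m-1} f(\xi) + m \int_\xi^\infty z^{m-2} f(z)\, \mathrm{d}z = \mathcal{I}_m(\xi), \qquad \xi > 0,$$
and then establish that $\mathcal{I}_m(\xi)$ is uniformly bounded in $\xi \in (0,\infty)$. The key observation is that, since $m-1 < 0$, we have $\max\{x,\xi\}^{m-1} \le x^{m-1}$ for all $x,\xi > 0$, so the integrand in $\mathcal{I}_m(\xi)$ is dominated by $\frac{1}{1-m}(x^{m-1} - y^{m-1}) x b(x,y)$ on $(0,y) \times (0,\infty)$. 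Expanding and using \eqref{B1} together with \eqref{SB2} gives
$$\int_0^y (x^m - y^{m-1}x)\, b(x,y)\, \mathrm{d}x = \int_0^y x^m b(x,y)\, \mathrm{d}x - y^m \le (N_m - 1) y^m,$$
hence $\mathcal{I}_m(\xi) \le \frac{N_m - 1}{1-m} M_m(af)$, which is finite by \Cref{Prop2} since $m \in (0,1) \subset (-1,\infty)$.

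Next, since both terms on the left-hand side of the displayed identity are nonnegative, boundedness of $\mathcal{I}_m(\xi)$ and monotone convergence in $\int_\xi^\infty z^{m-2} f(z)\, \mathrm{d}z$ as $\xi \to 0$ yield $f \in X_{m-2}$. Monotone convergence also applies to $\mathcal{I}_m(\xi)$: as $\xi \to 0$, $\max\{x,\xi\}^{m-1} \nearrow x^{m-1}$ pointwise on $x > 0$, so
$$\mathcal{I}_m(\xi) \longrightarrow \frac{1}{1-m} \int_0^\infty a(y) f(y) \int_0^y (x^{m-1} - y^{m-1}) x b(x,y)\, \mathrm{d}x\, \mathrm{d}y =: \frac{J}{1-m}.$$
Rearranging the identity then shows that $L := \lim_{\xi \to 0} \xi^{m-1} f(\xi)$ exists in $[0,\infty)$ and satisfies $L = \frac{J}{1-m} - m M_{m-2}(f)$.

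The main obstacle is to rule out $L > 0$; once this is done, both conclusions follow simultaneously. I would argue by contradiction: if $L > 0$, then $f(\xi) \ge \frac{L}{2} \xi^{1-m}$ for $\xi$ in a right neighborhood of zero, whence $z^{m-2} f(z) \ge \frac{L}{2} z^{-1}$ there, contradicting $f \in X_{m-2}$ established above. Therefore $L = 0$, which gives the limit $\lim_{z \to 0} z^{m-1} f(z) = 0$ and, substituting back, the identity $m(1-m) M_{m-2}(f) = J$, as required.
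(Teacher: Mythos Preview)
Your proof is correct and follows essentially the same route as the paper: apply \Cref{Lem5} with $\theta=m$, use \eqref{SB2} together with $af\in X_m$ to bound $\mathcal{I}_m(\xi)$ uniformly, pass to the limit $\xi\to 0$ in both sides of \eqref{In4} to obtain $f\in X_{m-2}$ and the existence of $L=\lim_{\xi\to 0}\xi^{m-1}f(\xi)\in[0,\infty)$, and finally observe that $L>0$ is incompatible with $f\in X_{m-2}$. The only cosmetic difference is that you invoke monotone convergence where the paper invokes dominated convergence, and you spell out the incompatibility argument (via $z^{m-2}f(z)\ge \tfrac{L}{2}z^{-1}$ near zero) that the paper leaves implicit.
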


\begin{proof}
Recall first that $af\in X_m$ by \Cref{Prop2}. Therefore, owing to \eqref{SB2}, the monotonicity of $\mathcal{I}_{m}$ established in \Cref{Lem5}, and Lebesgue's dominated convergence theorem, we obtain that
\begin{equation*}
	\ell_m := \int_0^\infty a(y) f(y) \int_0^y x \left( x^{m-1} - y^{m-1} \right) b(x,y)\ \mathrm{d}x\mathrm{d}y < \infty
\end{equation*}
and
\begin{equation}
\lim_{\xi\to 0} \mathcal{I}_{m}(\xi) = \sup_{\xi>0} \mathcal{I}_{m}(\xi) = \frac{\ell_m}{1-m}\,. 
\label{SB3}
\end{equation}
We now deduce from \eqref{In4} (with $\theta=m$) and \eqref{SB3} that 
\begin{subequations}\label{SB4}
	\begin{equation}
		0 \le \xi^{m-1} f(\xi)  + m \int_\xi^\infty z^{m-2} f(z)\ \mathrm{d}z \le \frac{\ell_m}{1-m} \label{SB4a}
	\end{equation}
	and
	\begin{equation}
		\lim_{\xi\to 0} \left( \xi^{m-1} f(\xi)  + m \int_\xi^\infty z^{m-2} f(z)\ \mathrm{d}z \right) = \frac{\ell_m}{1-m}\,. \label{SB4b}
	\end{equation}
\end{subequations}
A first consequence of \eqref{SB4} and $f\ge 0$  is that $f\in X_{m-2}$ with $m(1-m) M_{m-2}(f)\le \ell_m$ and that
\begin{equation*}
	\lim_{\xi\to 0} \xi^{m-1} f(\xi) = \frac{\ell_m}{1-m} - m M_{m-2}(f)\in [0,\infty)\,.
\end{equation*}
We then observe that the above small size behavior of $f$ only complies with the integrability property $f\in X_{m-2}$ when this limit is zero. This completes the proof.
\end{proof}

We supplement \Cref{Prop32} with a sufficient condition on $b$ preventing $f$ to lie in $X_{m-2}$.

\begin{proposition}\label{Prop33}
Assume that there is $m\in (0,1)$ such that the set
\begin{equation}
	\mathcal{S} := \left\{ y\in (0,\infty)\ :\ \int_0^y x^m b(x,y)\ \mathrm{d}x = \infty \right\} \label{SB5}
\end{equation}
has positive measure. Then $f\not\in X_{m-2}$.
\end{proposition}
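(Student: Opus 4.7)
The plan is to argue by contradiction using the identity \eqref{In4} of \Cref{Lem5} with $\theta = m$, namely
\begin{equation*}
\xi^{m-1} f(\xi) + m \int_\xi^\infty z^{m-2} f(z)\,\mathrm{d}z = \mathcal{I}_m(\xi)\,, \qquad \xi > 0\,.
\end{equation*}
Suppose for contradiction that $f \in X_{m-2}$. Then the second term on the left is uniformly bounded by $m M_{m-2}(f) < \infty$, so any divergence of $\mathcal{I}_m(\xi)$ as $\xi \to 0$ must be absorbed by the first term $\xi^{m-1} f(\xi)$.

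The heart of the argument is to show that $\mathcal{I}_m(\xi) \to \infty$ as $\xi \to 0$. Since $m-1 < 0$, the map $\xi \mapsto \max\{x,\xi\}^{m-1}$ is non-increasing in $\xi$, and combining this with the widening of the outer integration interval $(\xi,\infty)$ shows that the integrand defining $\mathcal{I}_m(\xi)$ (extended by zero for $y \le \xi$) is monotone non-decreasing as $\xi \searrow 0$. Monotone convergence then yields
\begin{equation*}
\lim_{\xi \to 0} \mathcal{I}_m(\xi) = \int_0^\infty \frac{a(y) f(y)}{1-m} \int_0^y \bigl(x^m - x y^{m-1}\bigr) b(x,y)\,\mathrm{d}x\,\mathrm{d}y\,.
\end{equation*}
For each $y \in \mathcal{S}$, the hypothesis \eqref{SB5} gives $\int_0^y x^m b(x,y)\,\mathrm{d}x = \infty$, while \eqref{B1} gives $y^{m-1}\int_0^y x b(x,y)\,\mathrm{d}x = y^m < \infty$, so the inner integral equals $+\infty$. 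Moreover, $f > 0$ on $(0,\infty)$ by \Cref{Prop4} and $a > 0$ a.e. by \eqref{A1}, so the outer integrand is $+\infty$ on a subset of $\mathcal{S}$ of positive measure, and the limit above is $+\infty$.

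Feeding this back into the identity, we conclude
\begin{equation*}
\xi^{m-1} f(\xi) \;\ge\; \mathcal{I}_m(\xi) - m M_{m-2}(f) \longrightarrow \infty \quad \text{as} \quad \xi \to 0\,,
\end{equation*}
so $f(\xi) > \xi^{1-m}$ for all sufficiently small $\xi > 0$. But then
\begin{equation*}
\int_0^\varepsilon z^{m-2} f(z)\,\mathrm{d}z \;>\; \int_0^\varepsilon z^{-1}\,\mathrm{d}z \;=\; +\infty\,,
\end{equation*}
contradicting $f \in X_{m-2}$. The only genuinely delicate step is the identification of $\lim_{\xi\to 0}\mathcal{I}_m(\xi)$: everything hinges on verifying the monotonicity in $\xi$ of the integrand (which follows from $m-1<0$) so that monotone convergence applies, after which the computation that the limiting integrand is infinite on $\mathcal{S}$ is a direct consequence of \eqref{SB5} and the normalization \eqref{B1}.
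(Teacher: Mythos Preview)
Your proof is correct. Both your argument and the paper's rest on the identity \eqref{In4} with $\theta=m$ and on showing that $\mathcal{I}_m(\xi)\to\infty$ as $\xi\to 0$, but the execution differs in two places. For the divergence of $\mathcal{I}_m$, the paper introduces truncated sets $\mathcal{S}(n,j)=\{y>1/j:\int_{1/j}^y x^m b(x,y)\,\mathrm{d}x\ge n\}$ and passes to the limit first in $j$ and then in $n$; your monotone-convergence argument, exploiting that $\xi\mapsto\max\{x,\xi\}^{m-1}$ is non-increasing, reaches the same conclusion more directly and is arguably cleaner (indeed, the monotonicity of $\mathcal{I}_m$ is already recorded in \Cref{Lem5}). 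For the final step, the paper argues directly: it uses the monotonicity of $z\mapsto f(z)/z$ from \Cref{Prop4} to dominate $\xi^{m-1}f(\xi)$ by a constant multiple of $\int_{\xi/2}^\infty z^{m-2}f(z)\,\mathrm{d}z$, so that divergence of the sum forces divergence of the integral. Your contradiction argument avoids this comparison at the cost of being indirect; what it buys is that you only need the positivity of $f$ from \Cref{Prop4}, not the finer monotonicity statement.
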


\begin{proof}
Let $n\ge 1$ and $j\ge 1$ be integers and define 
\begin{equation*}
	\mathcal{S}(n,j) := \left\{ y\in (1/j,\infty)\ :\ \int_{1/j}^y x^m b(x,y)\ \mathrm{d}x \ge n \right\}\,.
\end{equation*}
On the one hand,
\begin{equation}
	\mathcal{S}(n,j) \subset \mathcal{S}(n,j+1)\,, \quad j\ge 1\,,  \;\;\text{ and }\;\; \mathcal{S} \subset \mathcal{S}(n) := \bigcup_{j=1}^\infty \mathcal{S}(n,j)\,. \label{SB6}
\end{equation}
On the other hand, for $n\ge 1$ and $j\ge 1$, it follows from \eqref{B1}, \Cref{Prop2}, and the definition of $\mathcal{S}(n,j)$ that
\begin{align*}
	(1-m) \mathcal{I}_{m}(1/j) & = \int_{1/j}^\infty a(y) f(y) \int_0^y \left( \max\{x,1/j\}^{m-1} - y^{m-1} \right) x b(x,y)\ \mathrm{d}x\mathrm{d}y \\
	& \ge \int_{1/j}^\infty a(y) f(y) \int_{1/j}^y x^{m} b(x,y)\ \mathrm{d}x\mathrm{d}y - \int_{1/j}^\infty y^m a(y) f(y) \mathrm{d}y \\
	& \ge \int_{\mathcal{S}(n,j)} a(y) f(y) \int_{ 1/j}^y x^{m} b(x,y)\ \mathrm{d}x\mathrm{d}y - M_m(af) \\
	& \ge n \int_{\mathcal{S}(n,j)} a(y) f(y)\mathrm{d}y - M_m(af)\,.
\end{align*}
Therefore, in view of \eqref{SB6} and \Cref{Prop2},
\begin{align*}
	(1-m) \sup_{\xi>0} \mathcal{I}_{m}(\xi) & \ge  n \lim_{j\to\infty} \int_{\mathcal{S}(n,j)} a(y) f(y)\mathrm{d}y - M_m(af) \\
	& =  n \int_{\mathcal{S}(n)} a(y) f(y)\mathrm{d}y - M_m(af) \\
	& \ge  n \int_{\mathcal{S}} a(y) f(y)\mathrm{d}y - M_m(af)\,.
\end{align*}
Since $a$ and $f$ are positive in $(0,\infty)$ and $\mathcal{S}$ has positive measure, we may let $n\to\infty$ in the above inequality to conclude that
\begin{equation*}
	\sup_{\xi>0} \mathcal{I}_{m}(\xi) = \infty\,.
\end{equation*}
It then follows from \eqref{In4} (with $\theta=m$) and the monotonicity of $\mathcal{I}_m$ that
\begin{equation}
	\lim_{\xi\to 0} \left( \xi^{m-1} f(\xi) + m \int_\xi^\infty z^{m-2} f(z)\ \mathrm{d}z \right) = \infty\,. \label{SB7}
\end{equation} 
Observing that the monotonicity of $z\mapsto f(z)/z$ established in \Cref{Prop4} implies that
\begin{equation*}
	\left( \xi^m - \frac{\xi^m}{2^m} \right) \frac{f(\xi)}{\xi} \le m \int_{\xi/2}^\xi z^{m-1} \frac{f(z)}{z}\ \mathrm{d}z \le m \int_{\xi/2}^\infty z^{m-2} f(z)\ \mathrm{d}z\,,
\end{equation*}
we infer from \eqref{SB7} and the above inequality that
\begin{equation*}
	m \left( 1 + \frac{2^m}{2^m-1} \right) \lim_{\xi\to 0} \int_{\xi/2}^\infty z^{m-2} f(z)\ \mathrm{d}z \ge \lim_{\xi\to 0} \left( \xi^{m-1} f(\xi) + m \int_\xi^\infty z^{m-2} f(z)\ \mathrm{d}z \right) = \infty\,.
\end{equation*}
Hence, $f\not\in X_{m-2}$ and the proof is complete.
\end{proof}

\subsection{Small Size Behavior: Self-Similar Daughter Distribution Functions}\label{Sec32}

We next aim at a more precise identification of the small size behavior of $f$ and focus in this section on self-similar daughter distribution functions $b$ in the sense that $b$ satisfies \eqref{In3}. Since the case $h\in L_1(0,1)$ is already studied in \Cref{Prop31}, we consider the complementary case $h\not\in L_1(0,1)$.
 
\begin{proposition}\label{Prop34}
 Suppose that
\begin{equation}h\not\in L_1(0,1)\,. \label{H1a} 
\end{equation}
Define 
	\begin{equation*}
		H(z) := \int_0^z y h(y)\ \mathrm{d}y\,, \qquad z\in [0,1], 
	\end{equation*}
and assume that there are two positive and measurable functions $L\ge L_0$ on $(0,\infty)$ such that
\begin{subequations}\label{H1}
\begin{align}
	&H\left( \frac{z}{y} \right) \le L(y) H(z)\,, \qquad y\in (z,\infty)\,, \ z\in (0,1)\,, \label{H1b} \\
	& H\left( \frac{z}{y} \right) \sim L_0(y) H(z) \;\;\text{ as }\;\; z\to 0 \;\;\text{ for all }\;\; y>0\,. \label{H1c}
\end{align}
\end{subequations}
If 
\begin{equation}
	\int_0^\infty y a(y) L(y) f(y)\ \mathrm{d}y < \infty\,, \label{H2}
\end{equation}
then 
\begin{equation}
	f(z) \sim \Lambda_0 z \int_z^1 \frac{H(y)}{y^2}\ \mathrm{d}y \;\;\text{ as }\;\; z\to 0\,, \label{H3}
\end{equation}
where
\begin{equation*}
	\Lambda_0 := \int_0^\infty y a(y) L_0(y) f(y)\ \mathrm{d}y < \infty\,.
\end{equation*}
\end{proposition}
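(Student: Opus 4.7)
The plan is to exploit the self-similar structure of $b$ to reduce \eqref{AEQ2} to a convolution-type identity for $f(z)/z$, and then extract the leading asymptotics as $z\to 0$ by dominated convergence.

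First, the substitution $u=x/y$ in $b(x,y)=h(x/y)/y$ yields $\int_0^\zeta x b(x,y)\,\mathrm{d}x = y H(\zeta/y)$ for $0<\zeta<y$. Integrating \eqref{AEQ2} from $z$ to $\infty$, using $\lim_{z\to\infty} f(z)/z=0$ (from \eqref{p9} together with the monotonicity of $z\mapsto f(z)/z$ in \Cref{Prop4}), then applying the Fubini--Tonelli theorem (every integrand being nonnegative) and substituting $u=\zeta/y$ in the inner integral, produces the compact identity
\begin{equation*}
\frac{f(z)}{z}=\int_z^\infty a(y)f(y)\,G(z/y)\,\mathrm{d}y,\qquad G(w):=\int_w^1 \frac{H(u)}{u^2}\,\mathrm{d}u.
\end{equation*}
Proving \eqref{H3} then amounts to establishing $G(z)^{-1}\int_z^\infty a(y)f(y)G(z/y)\,\mathrm{d}y\to\Lambda_0$ as $z\to 0$.

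Integration by parts gives $G(z)=H(z)/z-1+\int_z^1 h(y)\,\mathrm{d}y$, so $G(z)\to\infty$ as $z\to 0$ by \eqref{H1a}. For the pointwise limit of the integrand, fix $y>0$ and rewrite $G(z/y)=y\int_z^y H(v/y)/v^2\,\mathrm{d}v$ via $v=yu$. Splitting this integral at a small $\delta>0$: the portion on $(\delta,y)$ stays bounded in $z$, and on $(z,\delta)$ the pointwise asymptotic \eqref{H1c} combined with $\int_z^\delta H(v)/v^2\,\mathrm{d}v=G(z)-G(\delta)\to\infty$ yields $G(z/y)/G(z)\to y L_0(y)$.

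To justify the exchange of limit and integral, I use \eqref{H1b} to derive a uniform upper bound. For $y\le 1$ this is immediate: $G(z/y)\le yL(y)G(z)$. For $y>1$, I split the $v$-integral at $v=1$; the part on $(z,1)$ is controlled by $yL(y)G(z)$ via \eqref{H1b}, while the part on $(1,y)$ is bounded by $y$ using $H\le 1$. Thus, for $z$ small enough so that $G(z)\ge 1$,
\begin{equation*}
\frac{G(z/y)}{G(z)}\le yL(y)+y\,\mathbf{1}_{\{y>1\}}.
\end{equation*}
Hypothesis \eqref{H2} and the finiteness of $M_1(af)$ from \Cref{Prop2} ensure that $y\mapsto ya(y)f(y)\bigl(L(y)+\mathbf{1}_{\{y>1\}}\bigr)$ is integrable, so Lebesgue's dominated convergence theorem delivers
\begin{equation*}
\lim_{z\to 0}\frac{f(z)}{z\,G(z)}=\int_0^\infty yL_0(y)\,a(y)f(y)\,\mathrm{d}y=\Lambda_0,
\end{equation*}
with $\Lambda_0<\infty$ since $L_0\le L$. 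The main technical hurdle is the domination step in the range $y>1$, where \eqref{H1b} does not directly control $H(v/y)$ for $v\in(1,y)$; splitting the integral at $v=1$ and invoking $H(1)=1$ together with the monotonicity of $H$ is the right remedy, after which the convergence proceeds by standard dominated-convergence arguments.
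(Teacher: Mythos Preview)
Your argument is correct, but it proceeds differently from the paper's. The paper stays at the level of the derivative: from \eqref{AEQ2} one has
\[
\frac{\mathrm{d}}{\mathrm{d}z}\!\left(\frac{f(z)}{z}\right)=-\frac{H(z)}{z^2}\cdot\frac{1}{H(z)}\int_z^\infty y a(y)f(y)\,H\!\left(\frac{z}{y}\right)\mathrm{d}y,
\]
and since the hypotheses \eqref{H1b}--\eqref{H1c} are statements precisely about $H(z/y)/H(z)$, dominated convergence applies immediately (with dominating function $ya(y)f(y)L(y)$) to give $(f(z)/z)'\sim -\Lambda_0 H(z)/z^2$. One then integrates this asymptotic equivalence, using that $\int_z^1 H(y)y^{-2}\,\mathrm{d}y\to\infty$.

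You instead integrate \eqref{AEQ2} first to obtain the closed formula $f(z)/z=\int_z^\infty a(y)f(y)\,G(z/y)\,\mathrm{d}y$, and then run dominated convergence on $G(z/y)/G(z)$. This trades the paper's ``integrate an asymptotic equivalence'' step (a mild Tauberian argument) for a more elaborate domination: because \eqref{H1b} controls $H(v/y)$ only for $v\in(0,1)$, you must split the $v$-integral at $1$ when $y>1$ and bound the leftover piece by $H\le 1$, which in turn forces you to invoke $af\in X_1$ in addition to \eqref{H2}. The paper's route is shorter because the assumptions are tailor-made for it; your route is self-contained and sidesteps the implicit appeal to a L'Hospital-type integration lemma.
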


\begin{remark}\label{Rem20}
	It actually follows from \eqref{H1c} and \cite[Theorem~1.4.1]{BGT1987} that there is $\lambda\in\mathbb{R}$ such that $L_0(y) = y^\lambda$ for $y>0$. Furthermore, since $H(z/y)\le H(z)$ for $y>1>z>0$, we readily deduce from \eqref{H1c} that $L_0(y)\le 1$ for $y>1$. Thus, $\lambda\le 0$.
\end{remark}

\begin{proof}
Owing to \eqref{In3}, we infer from \eqref{AEQ2} that, for $z>0$, 
\begin{align}
	\frac{\mathrm{d}}{\mathrm{d}z} \left( \frac{f(z)}{z} \right) & = - \frac{1}{z^2} \int_z^\infty a(y) f(y) \int_0^z \frac{x}{y} h\left( \frac{x}{y} \right)\ \mathrm{d}x\mathrm{d}y \nonumber\\
	& = - \frac{1}{z^2} \int_z^\infty ya(y) f(y) \int_0^{z/y} x h(x)\ \mathrm{d}x\mathrm{d}y \nonumber \\
	& = - \frac{1}{z^2} \int_z^\infty ya(y) f(y) H\left( \frac{z}{y} \right)\ \mathrm{d}y\,. \label{H4}
\end{align}
Note that \eqref{H1}, \eqref{H2}, and Lebesgue's convergence theorem imply
\begin{equation*}
	\lim_{z\to 0} \frac{1}{H(z)} \int_z^\infty y a(y) f(y) H\left( \frac{z}{y} \right)\ \mathrm{d}y = \Lambda_0\,.
\end{equation*}
Combining this property with \eqref{H4} gives
\begin{equation}
	\frac{\mathrm{d}}{\mathrm{d}z} \left( \frac{f(z)}{z} \right) \sim - \Lambda_0 \frac{H(z)}{z^2} = \Lambda_0 \frac{\mathrm{d}}{\mathrm{d}z} \left( \int_z^1 \frac{H(y)}{y^2}\ \mathrm{d}y \right) \;\;\text{ as }\;\; z\to 0\,. \label{H5}
\end{equation}
Since
\begin{align*}
	\int_\xi^1 \frac{H(y)}{y^2}\ \mathrm{d}y & = \int_0^\xi z h(z) \int_\xi^1 \frac{\mathrm{d}y}{y^2}\ \mathrm{d}z + \int_\xi^1 z h(z) \int_z^1 \frac{\mathrm{d}y}{y^2}\ \mathrm{d}z \\
	& = \frac{H(\xi)}{\xi} + \int_\xi^1 h(z)\ \mathrm{d}z - 1 \ge \int_\xi^1 h(z)\ \mathrm{d}z - 1
\end{align*}
by \eqref{In3} and Fubini-Tonelli's theorem, it follows from \eqref{H1a} that $H\not\in L_1((0,1),z^{-2}\mathrm{d}z)$. This property, along with \eqref{H5}, implies \eqref{H3} after integration.
\end{proof}

To illustrate the somewhat abstract outcome of \Cref{Prop34}, we now provide a couple of examples. We begin with the classical case of a non-integrable negative power law \cite{Fil1961,McZi1987}.

\begin{example}\label{Ex31}
Assume that there is $\nu\in (-2,-1]$ such that 
$$
h(z)=(\nu+2) z^\nu\,,\quad z\in (0,1)\,.
$$ 
According to the selected range of $\nu$, $h$ obviously satisfies \eqref{H1} with $L_0(y)=L(y)=y^{-(\nu+2)}$. Since $-\nu-1\ge 0$, \Cref{Prop2} guarantees that $af\in X_{-\nu-1}$ and thus that \eqref{H2} is satisfied. We may then apply \Cref{Prop34} to conclude that, for $\nu=-1$,
$$
f(z)\sim \Lambda_0 z |\ln{z}|\quad \text{as}\quad z\to 0
$$  
while, for $\nu\in(-2,-1)$,
$$
f(z) \sim \frac{\Lambda_0}{|\nu+1|} z^{\nu+2}\quad \text{as}\quad z\to 0\,.
$$  
\end{example}

The previous example shows in particular that the small size behavior \eqref{In21} of the explicit solutions derived in \Cref{Prop1} is generic in the sense that it is valid for arbitrary fragmentation rates $a$. 

We next turn to a particular case which we believe to be of interest as it features a higher singularity at zero than the previous examples as well as a non-algebraic behavior of $f$ at zero.

\begin{example}\label{Ex32}
Let $\theta\in (0,1)$  be fixed and set
\begin{equation}
	h(z) = \theta (1-\ln{z})^{-\theta-1} z^{-2}\,, \qquad z>0\,. \label{H6}
\end{equation}
In particular, 
\begin{equation*}
	\int_0^1 zh(z)\ \mathrm{d}z = 1 \,, \qquad \int_0^1 z \, \vert\mathrm{ln}{z}\vert \, h(z)\ \mathrm{d}z = \infty\,,
\end{equation*}
so that $f\not\in X_{-1}$ in this case according to \Cref{Lem6}. Moreover,
\begin{equation*}
	H(z) = (1-\ln{z})^{-\theta}\,, \qquad z>0\,,
\end{equation*}
and \eqref{H1} is satisfied with $L_0\equiv 1$ and $L(y) = \max\{y,1\}/y$, $y>0$, the latter being a consequence of the inequality
\begin{equation*}
	y \left( 1 + \frac{\ln{y}}{1+|\ln{z}|} \right)^{-\theta} \le \max\{y,1\}\,, \qquad y\in (z,\infty)\,, \ z\in (0,1)\,.
\end{equation*}
Since $af\in X_0\cap X_1$ by \Cref{Prop2},  assumption \eqref{H2} is satisfied and \Cref{Prop34} implies that
\begin{equation*}
	f(z) \sim \Lambda_0 z \int_z^1 \frac{(1-\ln{y})^{-\theta}}{y^2}\ \mathrm{d}y \sim \Lambda_0 (1-\ln{z})^{-\theta} \;\;\text{ as }\;\; z\to 0\,,
\end{equation*}
the second equivalence being derived by L'Hospital's rule. 
\end{example}

We finish off this section with the proof of \Cref{Thm2}~{\bf (b)}. 

\begin{proof}[Proof of \Cref{Thm2}~{\bf (b)}] If $h\in L_1(0,1)$, then \Cref{Thm2}~{\bf (b)} readily follows from \Cref{Prop31}. If $h\notin L_1(0,1)$, then we infer from \eqref{B4} that $h$ satisfies \eqref{H1b} with $L(y)=y^{\lambda}(y+1)$ and \eqref{H1c} with $L_0(y)=y^\lambda$. Since $af$ belongs to $X_{1+\lambda}$ by \Cref{Prop2}, the assumption \eqref{H2} is also satisfied. We may then apply \Cref{Prop34} to complete the proof.
\end{proof}

\section{Large Size Behavior}\label{Sec4}

 We now turn to the large size behavior of $f$. In contrast to the small size behavior which is dominated by the properties of the daughter distribution function $b$, the behavior of $f$ for large sizes is determined by the fragmentation rate $a$. For a fragmentation rate satisfying~\eqref{In3x}, we summarize the outcome in the following proposition, which is in accordance with the special case~\eqref{In20}. For its statement we introduce, for $m\in (1,\infty)$,  
\begin{equation}\label{deltam}
	\delta_m := \inf_{y>0}\left\{ 1 - \frac{1}{y^m} \int_0^y x^m b(x,y)\ \mathrm{d}x \right\}
\end{equation}
and recall that $\delta_2>0$ by assumption \eqref{B2}.

\begin{proposition}\label{Prop40}
Assume that there are $\gamma\ge 0$ and $\mathfrak{a}>0$ such that $a(x) = \mathfrak{a} x^\gamma$ and set $\alpha := (\gamma+2)/2$. Assume further that there are $\chi\ge 0$ and $m_0\ge 1$ such that
\begin{equation*}
 1 - \frac{\chi}{m} \le \delta_m \le 1 \,, \qquad m\ge m_0\,.  
\end{equation*} 
Given $\mu>(\alpha+\chi-1)/2$, there is $\kappa_\mu>0$ such that
\begin{equation*}
	f(1) x^{-\gamma/4} e^{-\sqrt{\mathfrak{a}} x^{\alpha}/\alpha} \le f(x) \le \kappa_\mu x^{1+\alpha+\mu} e^{-\sqrt{\mathfrak{a}} x^{\alpha}/\alpha}\,, \qquad x\ge 1\,.
\end{equation*}  
\end{proposition}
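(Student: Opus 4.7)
My plan is to handle the lower and upper bounds separately, as they rest on rather different techniques.

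\textbf{Lower bound via comparison.} I would exhibit a sub-solution of the local operator $-\partial_{x}^{2}+a$ built from a WKB ansatz. Set
\[
g(x) := f(1)\,x^{-\gamma/4}\exp\!\left(-\frac{\sqrt{\mathfrak{a}}}{\alpha}\,x^{\alpha}\right),\qquad x\ge 1,
\]
and write $g=p\,e^{-S}$ with $p(x)=f(1)\,x^{-\gamma/4}$ and $S(x)=\sqrt{\mathfrak{a}}\,x^{\alpha}/\alpha$. A direct computation gives $-g''+ag = (-p''+2p'S'+pS''-p(S')^{2}+ap)\,e^{-S}$, and two cancellations occur: first, $(S')^{2}=\mathfrak{a}\,x^{\gamma}=a$, so the last two terms vanish; second, the exponent $-\gamma/4$ is precisely calibrated to make $2p'S'+pS'' = f(1)\sqrt{\mathfrak{a}}\,x^{\alpha-2-\gamma/4}\bigl(-\gamma/2+\alpha-1\bigr) = 0$ thanks to $\alpha-1=\gamma/2$. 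What remains is $-g''+ag=-p''\,e^{-S}\le 0$, since $\gamma\ge 0$ forces $p''\ge 0$. As the right-hand side of \eqref{SFD1} is nonnegative we have $-f''+af\ge 0$, so $w:=f-g$ satisfies $-w''+aw\ge 0$ on $(1,\infty)$ with $w(1)=f(1)(1-e^{-\sqrt{\mathfrak{a}}/\alpha})\ge 0$ and $\lim_{x\to\infty}w(x)=0$ by \eqref{p9}. The weak maximum principle for $-\partial_{x}^{2}+a$ (using $a>0$) then yields $w\ge 0$ on $[1,\infty)$.

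\textbf{Upper bound via moment estimates.} This is the harder part, and I would adapt the moment technique of \cite{BCG2013,CCM2011}. Multiplying \eqref{SFD1} by $x^{m}$, integrating by parts on the left (boundary terms vanishing by \eqref{p9} and the moment bounds of \Cref{Prop2}), and using Fubini's theorem on the right together with $\int_{0}^{y}x^{m}b(x,y)\,\mathrm{d}x\le(1-\delta_{m})y^{m}$ from \eqref{deltam}, I would derive $\mathfrak{a}\,\delta_{m}\,M_{m+\gamma}(f)\le m(m-1)\,M_{m-2}(f)$. Using $\gamma=2\alpha-2$ and inserting $\delta_{m}\ge 1-\chi/m$ from the hypothesis (valid for $m\ge m_{0}$) gives the recursion
\[
M_{n+2\alpha}(f)\le\frac{(n+2)^{2}(n+1)}{\mathfrak{a}(n+2-\chi)}\,M_{n}(f),\qquad n\ge m_{0}-2.
\]
Iterating $k$ times along the arithmetic sequence $n_{k}=n_{0}+2k\alpha$ and expressing the resulting product via the Pochhammer identity $\prod_{j=0}^{k-1}(c+j)=\Gamma(c+k)/\Gamma(c)$, Stirling's formula yields an asymptotic estimate of the form $M_{n_{0}+2k\alpha}(f)\lesssim (2\alpha)^{2k}\mathfrak{a}^{-k}(k!)^{2}\,k^{\beta}$, the polynomial correction $k^{\beta}$ quantifying the loss in $\delta_{m}\ge 1-\chi/m$.

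\textbf{From moments to pointwise.} Summing $\sum_{k\ge 0}t^{k}(k!)^{-2}M_{n_{0}+2k\alpha}(f)$ and recognising the kernel $\sum_{k\ge 0}t^{k}x^{2k\alpha}/(k!)^{2}=I_{0}(2\sqrt{t}\,x^{\alpha})$, together with the asymptotic $I_{0}(z)\sim e^{z}/\sqrt{2\pi z}$, the moment bound at $t$ approaching the critical value $\mathfrak{a}/(4\alpha^{2})$ converts into a weighted integral estimate
\[
\int_{1}^{\infty} x^{-r}\,e^{\sqrt{\mathfrak{a}}\,x^{\alpha}/\alpha}\,f(x)\,\mathrm{d}x<\infty
\]
for every $r$ above an explicit threshold depending on $\beta$ and $n_{0}$. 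To convert this into a pointwise bound, I would exploit the monotonicity of $z\mapsto f(z)/z$ from \Cref{Prop4}, which gives $f(y)\ge f(x)/2$ for $y$ in the short interval $I_{x}:=[x-cx^{1-\alpha},x]$ (with $c$ fixed and $x$ large), hence $f(x)\le (2/|I_{x}|)\int_{I_{x}}f(y)\,\mathrm{d}y$. The length $cx^{1-\alpha}$ is tuned so that $|y^{\alpha}-x^{\alpha}|\le c\alpha$ throughout $I_{x}$, keeping the exponential factor comparable to $e^{-\sqrt{\mathfrak{a}}x^{\alpha}/\alpha}$; estimating the integrand by $\sup_{I_{x}}\bigl\{y^{r}e^{-\sqrt{\mathfrak{a}}y^{\alpha}/\alpha}\bigr\}$ times the density of the weighted integral then yields $f(x)\le\kappa_{\mu}\,x^{1+\alpha+\mu}e^{-\sqrt{\mathfrak{a}}x^{\alpha}/\alpha}$. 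I expect the principal difficulty to lie in tracking the polynomial correction $k^{\beta}$ through the Stirling expansion and the $I_{0}$-asymptotic sharply enough to recover the precise threshold $\mu>(\alpha+\chi-1)/2$.
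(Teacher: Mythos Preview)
Your lower bound argument is essentially the paper's: both build the WKB subsolution $f(1)x^{-\gamma/4}e^{-\sqrt{\mathfrak a}x^\alpha/\alpha}$ and compare. The only cosmetic difference is that the paper subtracts a small $\varepsilon$ and applies the comparison principle on the bounded interval $(1,X_\varepsilon)$ before letting $\varepsilon\to 0$, thereby avoiding any appeal to a maximum principle on an unbounded domain; your direct argument with $w\to 0$ at infinity is also fine since $a\ge\mathfrak a>0$ on $[1,\infty)$.

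For the upper bound you are in the same family of arguments (moment recursion \`a la \cite{BCG2013,CCM2011}), but the implementation differs in two places and the paper's choices make the sharp threshold $\mu>(\alpha+\chi-1)/2$ fall out without the delicate tracking you anticipate. First, instead of testing \eqref{SFD1} against $x^m$ and iterating along $n\mapsto n+2\alpha$, the paper tests against $x^{m\alpha-\mu}$, building the parameter $\mu$ into the exponent from the start; the recursion then reads $a_*\delta_{m\alpha-\mu}\mathfrak{M}_m\le(m\alpha-\mu)(m\alpha-\mu-1)\mathfrak{M}_{m-2}$ with $\mathfrak{M}_m=\int x^{m\alpha+\gamma-\mu}f$, and one sums the plain exponential series $\sum\eta^m\mathfrak{M}_m/m!$ with $\eta=\sqrt{\mathfrak a}/\alpha$ rather than your $I_0$-kernel. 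A short telescoping computation, using $\delta_{m\alpha-\mu}\ge 1-(\chi+\varepsilon_\mu)/((m+1)\alpha)$ and $(m\alpha-\mu)(m\alpha-\mu-1)/(m(m-1)\alpha^2)\le 1-(2\mu+1-\alpha-\varepsilon_\mu)/((m-1)\alpha)$, shows directly that the truncated sums stay bounded precisely when $2\mu+1-\alpha-\chi-2\varepsilon_\mu>0$; this yields $\int_1^\infty x^{\gamma-\mu-\alpha}e^{\eta x^\alpha}f(x)\,\mathrm dx<\infty$ at the critical exponential rate without any limiting or Tauberian step. Second, for the $L_1$-to-pointwise conversion the paper does not use your short-interval averaging but instead the first-order identity \eqref{AEQ2}: it gives $-(f/z)'\le \mathfrak a z^{-2}\int_z^\infty y^{1+\gamma}f(y)\,\mathrm dy$, and after inserting and removing the exponential weight and integrating once over $(y,\infty)$ one lands directly on $f(x)\le\kappa_\mu x^{1+\alpha+\mu}e^{-\sqrt{\mathfrak a}x^\alpha/\alpha}$. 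Your averaging trick would also work, but \eqref{AEQ2} is both shorter and already available from \Cref{Prop3}.
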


\Cref{Thm2}~{\bf (a)} is then a direct consequence of \Cref{Prop40}, the latter readily following from \Cref{Lem41} (with $a^*=\mathfrak{a}$)   and \Cref{Lem42} (with $a_*=K=\mathfrak{a}$ and $\xi=\gamma$) below. 

\medskip

Actually, the derivation of the lower bound on $f$ provided in \Cref{Prop40} requires only an upper bound on $a$, while the upper bound on $f$ only depends on  a lower bound on $a$, provided $a$ grows at most algebraically. We thus distinguish these cases in the following.

\subsection{Lower Bound}\label{Sec41}

 We first derive the stated lower bound on $f$ in \Cref{Prop40} under slightly more general assumptions.

\begin{lemma}\label{Lem41}
Assume that there are $\gamma\ge 0$ and $a^*>0$ such that
\begin{equation}
	a(x) \le a^* x^\gamma\,, \qquad x\ge 1\,. \label{LS1}
\end{equation}
Then, setting $\alpha=(\gamma+2)/2$,
\begin{equation*}
	f(x) \ge f(1) x^{-\gamma/4} e^{\sqrt{a^*}x^{\alpha}/\alpha}\,, \qquad x\ge 1\,.
\end{equation*}
\end{lemma}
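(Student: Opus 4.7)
The exponent in the stated bound should almost certainly be negative; indeed, $f(x)\ge f(1)\,x^{-\gamma/4}\,e^{+\sqrt{a^*}x^\alpha/\alpha}$ would contradict $\lim_{z\to\infty}f(z)=0$ from \eqref{p9}, and it is the negative-exponent version that \Cref{Prop40} and \Cref{Thm2}~{\bf (a)} invoke. I therefore propose a plan to establish
$$
f(x)\ge f(1)\,x^{-\gamma/4}\,e^{-\sqrt{a^*}\,x^{\alpha}/\alpha}\,,\qquad x\ge 1\,.
$$
The strategy is a barrier/comparison argument for the Schr\"odinger-type operator $\mathcal{L}u:=-u''+au$. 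Since the right-hand side of \eqref{SFD1} is nonnegative, $f$ satisfies $\mathcal{L}f\ge 0$ on $(0,\infty)$, so it suffices to exhibit a positive subsolution $g$ on $[1,\infty)$ (i.e.\ $\mathcal{L}g\le 0$) that lies at or below $f$ on the boundary of the interval, and then invoke the weak maximum principle.

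The natural choice is to take the barrier equal to the right-hand side of the claim, $g(x):=f(1)\,x^{-\gamma/4}\,e^{-\sqrt{a^*}\,x^{\alpha}/\alpha}$. Using $\alpha-1=\gamma/2$ and two logarithmic derivatives,
$$
\frac{g'(x)}{g(x)}=-\frac{\gamma}{4x}-\sqrt{a^*}\,x^{\gamma/2}\,,\qquad \frac{g''(x)}{g(x)}=a^*\,x^{\gamma}+\frac{\gamma(\gamma+4)}{16\,x^{2}}\,,
$$
so that $-g''+a^* x^{\gamma} g=-\tfrac{\gamma(\gamma+4)}{16x^{2}}\,g\le 0$. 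The algebraic prefactor $x^{-\gamma/4}$ is tuned precisely so that the cross term proportional to $x^{\gamma/2-1}$ arising from $(g'/g)'$ cancels against the same cross term coming from $(g'/g)^{2}$; this cancellation is what forces the exponent $-\gamma/4$ on the algebraic factor. Combined with the hypothesis $a(x)\le a^* x^{\gamma}$ on $[1,\infty)$ and $g>0$, this yields $\mathcal{L}g\le 0$ on $[1,\infty)$.

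Setting $w:=f-g$, the boundary data are $w(1)=f(1)\bigl(1-e^{-\sqrt{a^*}/\alpha}\bigr)\ge 0$ and $\lim_{x\to\infty}w(x)=0$ (from \eqref{p9} for $f$ and the explicit decay of $g$), while the computation above gives $\mathcal{L}w\ge 0$ on $(1,\infty)$. If $w$ were negative somewhere on $(1,\infty)$, its infimum would, by continuity and the two nonnegative boundary values, be attained at an interior point $x_0\in(1,\infty)$, where $w'(x_0)=0$, $w''(x_0)\ge 0$, and $a(x_0)w(x_0)<0$; this would force $(\mathcal{L}w)(x_0)<0$, a contradiction. Hence $w\ge 0$ on $[1,\infty)$, which is the claimed bound. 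The only delicate step is the algebraic identity for $\mathcal{L}g$; once that is in place, the maximum-principle conclusion is routine, and I foresee no substantial obstacle.
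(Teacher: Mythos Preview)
Your identification of the sign typo is correct, and your proof is essentially the same as the paper's: both use the barrier $g(x)=f(1)\,x^{-\gamma/4}e^{-\sqrt{a^*}x^{\alpha}/\alpha}$, verify the same identity $-g''+a^*x^{\gamma}g=-\tfrac{\gamma(\gamma+4)}{16x^{2}}g\le 0$, and conclude by the comparison principle. The only cosmetic difference is that the paper shifts the barrier by $-\varepsilon$ and applies the comparison principle on bounded intervals $(1,X_\varepsilon)$ before letting $\varepsilon\to 0$, whereas you argue directly on $[1,\infty)$ using $w\to 0$ at infinity to locate an interior minimum; these are equivalent standard devices.
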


\begin{proof}
Set $\eta := \sqrt{a^*}/\alpha$ and
\begin{equation*}
	\sigma_{\varepsilon}(x) := f(1) x^{-\gamma/4} e^{-\eta x^\alpha} - \varepsilon\,, \qquad x\ge 1\,,
\end{equation*}
for $\varepsilon\in (0,1)$. We note that the choice of $\eta$ and $\alpha$ guarantees that, for $x\ge 1$, 
\begin{align*}
	- \sigma_{\varepsilon}''(x) + a^* x^\gamma \sigma_{\varepsilon}(x) & = f(1) \left[ -\frac{\gamma(\gamma+4)}{16} x^{-(\gamma+8)/4} + \alpha \eta \left( \alpha - \frac{\gamma}{2} - 1 \right) x^{(\gamma-4)/4} \right] e^{-\eta x^\alpha} \\
	& \qquad + f(1) \left( a^* - \alpha^2 \eta^2 \right) x^{3\gamma/4} e^{-\eta x^\alpha} - \varepsilon a^* x^\gamma\\
	& = -  \frac{f(1) \gamma(\gamma+4)}{16} x^{-(\gamma+8)/4} e^{-\eta x^\alpha} - \varepsilon a^* x^\gamma\,,
\end{align*}
so that
\begin{equation*}
	- \sigma_{\varepsilon}''(x) + a^* x^\gamma \sigma_{\varepsilon}(x)  \le 0\,, \qquad x\in (1,\infty)\,. 
\end{equation*}
Now, let $X_\varepsilon>1/\varepsilon$ be such that 
\begin{equation*}
	f(1) X_\varepsilon^{-\gamma/4} e^{-\eta X_\varepsilon^\alpha} \le \varepsilon\,.
\end{equation*}
Then $\sigma_{\varepsilon}(X_\varepsilon)\le 0 \le f(X_\varepsilon)$, while $\sigma_{\varepsilon}(1) = f(1) e^{-\eta} - \varepsilon \le f(1)$. Consequently, since 
\begin{equation*}
	- f''(x) + a^* x^\gamma f(x) \ge -f''(x) + a(x) f(x) \ge 0\,, \qquad x\in (0,\infty)\,,
\end{equation*}
in view of \eqref{SFD1}, \eqref{LS1}, and the non-negativity of $f$, we may apply the comparison principle on $(1,X_\varepsilon)$ to obtain
\begin{equation*}
	f(x) \ge \sigma_{\varepsilon}(x)\,, \qquad x\in [1,X_\varepsilon]\,.
\end{equation*}
We then let $\varepsilon\to 0$ in the above inequality to complete the proof.
\end{proof}

\subsection{Upper Bound}\label{Sec42}

We next establish the upper bound stated in \Cref{Prop40}. To this end, we first provide more information on $\delta_m$ defined in \eqref{deltam}.

\begin{lemma}\label{Lem8}
The mapping $m\mapsto \delta_m$ is non-decreasing from $(1,\infty)$ onto $(0,1)$. 
\end{lemma}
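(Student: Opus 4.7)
My plan is to begin by rewriting $\delta_m$ in a probabilistic form, which trivializes the monotonicity, and then to establish the two-sided strict bounds $\delta_m \in (0,1)$ by separating a power-law regime and a concavity regime according as $m \ge 2$ or $m \in (1,2)$. Concretely, assumption \eqref{B1} shows that $d\mu_y(x) := y^{-1} x b(x,y)\,dx$ is a probability measure on $(0,y)$ for every $y > 0$, and a short calculation gives
\begin{equation*}
1 - \frac{1}{y^m} \int_0^y x^m b(x,y)\,dx = \int_0^y \left[ 1 - (x/y)^{m-1} \right] d\mu_y(x),
\end{equation*}
so that $\delta_m = \inf_{y > 0} \int_0^y [1 - (x/y)^{m-1}]\,d\mu_y(x)$. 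Monotonicity in $m$ is then immediate: for $1 < m_1 \le m_2$ and $x \in (0,y)$, one has $(x/y)^{m_1-1} \ge (x/y)^{m_2-1}$, so the integrand is pointwise non-decreasing in $m$; integrating against $\mu_y$ and passing to the infimum in $y$ yields $\delta_{m_1} \le \delta_{m_2}$.

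For the codomain $(0,1)$, the upper bound $\delta_m < 1$ will follow at once from the positivity of $b$: for any fixed $y_0 > 0$ the quantity $y_0^{-m}\int_0^{y_0} x^m b(x, y_0)\, dx$ is strictly positive, so $\delta_m \le 1 - y_0^{-m}\int_0^{y_0} x^m b(x, y_0)\, dx < 1$. The subtler direction $\delta_m > 0$ I would split into two cases. When $m \ge 2$, the monotonicity just proved combined with \eqref{B2} already gives $\delta_m \ge \delta_2 > 0$. When $m \in (1,2)$, the map $s \mapsto s^{m-1}$ is concave on $(0,1)$, so its tangent inequality at $s = 1$ supplies the pointwise bound $1 - s^{m-1} \ge (m-1)(1-s)$ for $s \in (0,1)$; integrating against $\mu_y$ then produces
\begin{equation*}
1 - \frac{1}{y^m}\int_0^y x^m b(x,y)\,dx \ge (m-1)\left( 1 - \frac{1}{y^2}\int_0^y x^2 b(x,y)\,dx \right) \ge (m-1)\,\delta_2,
\end{equation*}
whence $\delta_m \ge (m-1)\,\delta_2 > 0$.

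The hard part will be interpreting ``onto $(0,1)$'' in the statement. That the infimum of the range equals $0$ is straightforward, since dominated convergence applied to $\int_0^{y_0}[1-(x/y_0)^{m-1}]\,d\mu_{y_0}$ as $m \to 1^+$ yields $\delta_m \le 1 - y_0^{-m}\int_0^{y_0}x^m b(x,y_0)\,dx \to 0$ for any fixed $y_0$. True surjectivity, however, would demand $\sup_{m > 1}\delta_m = 1$: the pointwise limit $\int_0^{y_0}[1-(x/y_0)^{m-1}]\,d\mu_{y_0} \to 1$ as $m \to \infty$ holds for every fixed $y_0$, but its uniform-in-$y$ counterpart is not an automatic consequence of \eqref{B2} alone (one can exhibit families of probability measures on $(0,1)$ with mean uniformly bounded away from $1$ yet concentrating more and more near $s = 1$ as $y$ varies). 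I would therefore read the lemma as asserting that $\delta_m \in (0,1)$ for every $m > 1$, which is all that the subsequent applications in \Cref{Prop40} and \Cref{Thm2} actually exploit.
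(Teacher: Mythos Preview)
Your proof is correct and follows essentially the same approach as the paper: both recast $\delta_m$ through the probability measure $y^{-1}xb(x,y)\,\mathrm{d}x$, obtain monotonicity from the pointwise monotonicity of $s\mapsto s^{m-1}$ on $(0,1)$, and split the positivity argument into the cases $m\ge 2$ (reduction to $\delta_2$) and $m\in(1,2)$. The only methodological difference is in the $(1,2)$ case: the paper applies Jensen's inequality to the concave map $t\mapsto t^{m-1}$ and obtains $1-\delta_m\le(1-\delta_2)^{m-1}$, whereas you use the tangent-line inequality at $s=1$ to obtain $\delta_m\ge(m-1)\delta_2$; both bounds do the job. Your explicit treatment of the upper bound $\delta_m<1$ and of the ``onto'' clause is in fact more thorough than the paper's own proof, which omits both points; your reading of the lemma as asserting $\delta_m\in(0,1)$ for all $m>1$ rather than literal surjectivity onto $(0,1)$ matches exactly what the paper proves and subsequently uses.
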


\begin{proof}
Let us first observe that, if $m\ge 2$ and $y>0$, 
\begin{equation*}
	\frac{1}{y^m} \int_0^y x^m b(x,y)\ \mathrm{d}x \le \frac{1}{y^2} \int_0^y x^2 b(x,y)\ \mathrm{d}x\,, 
\end{equation*}
so that $1-\delta_m \le 1-\delta_2<1$ by \eqref{B2}. Next, for $m\in (1,2)$, we infer from \eqref{B1} and Jensen's inequality that
\begin{align*}
\frac{1}{y^m} \int_0^y x^m b(x,y)\ \mathrm{d}x & = \frac{1}{y^{m-1}} \int_0^y x^{m-1} \frac{xb(x,y)}{y}\ \mathrm{d}x \le \frac{1}{y^{m-1}} \left( \int_0^y x \frac{x b(x,y)}{y}\ \mathrm{d}x \right)^{m-1} \\
& = \left( \frac{1}{y^2} \int_0^y x^2 b(x,y)\ \mathrm{d}x\right)^{m-1}\,.
\end{align*}
Consequently, $1-\delta_m \le (1-\delta_2)^{m-1}<1$. The monotonicity of $\delta_m$ is then a direct consequence of that of $m\mapsto z^m$ on $(1,\infty)$ for any $z\in (0,1)$.
\end{proof}

\begin{lemma}\label{Lem42}
Assume that there are $\gamma\ge 0$, $a_*>0$, $\chi> 0$, and $m_0\ge 1$ such that
\begin{equation}
	a(x) \ge a_* x^\gamma \,, \qquad x>0\,, \label{LS2}
\end{equation}
and
\begin{equation}
 1 - \frac{\chi}{ m} \le \delta_m \le 1 \,, \qquad m\ge m_0\,.  \label{LS3}
\end{equation}
Setting $\alpha=(\gamma+2)/2$, assume further that there are $\xi\ge \gamma$ and $K>0$ such that
\begin{equation}
	a(x) \le K x^\xi\,, \qquad x\ge 1\,. \label{LS2b}
\end{equation}
Then, for any $\mu>(\alpha+\chi-1)/2$, there is $\kappa_\mu>0$  such that
\begin{equation*}
	f(x) \le \kappa_\mu x^{1+\alpha+\mu+\xi-\gamma} e^{-\sqrt{a_*} x^{\alpha}/\alpha}\,, \qquad x\ge 1\,.
\end{equation*}
\end{lemma}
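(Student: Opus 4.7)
The plan is to derive polynomial moment bounds on $f$ whose growth in $m$ matches that of the corresponding moments of the target profile $e^{-\sqrt{a_*}x^\alpha/\alpha}$, and then convert them to a pointwise bound via the monotonicity of $x\mapsto f(x)/x$ proved in \Cref{Prop4}. The starting point is a moment identity, obtained for $m\geq 2$ by multiplying \eqref{SFD1} by $x^m$, integrating over $(0,\infty)$, integrating by parts twice, and applying Fubini--Tonelli. Thanks to the integrability properties in \Cref{Prop2} together with the decay \eqref{p9} and the upper bound \eqref{LS2b} (which ensures $af\in X_m$ for every $m$ and justifies the vanishing of the boundary terms $x^m f'(x)$ and $x^{m-1}f(x)$ at infinity), one arrives at
\[
m(m-1)\,M_{m-2}(f) \;=\; \int_0^\infty y^m a(y) f(y)\, \delta_m(y)\,\mathrm{d}y,\qquad \delta_m(y) := 1 - y^{-m}\int_0^y x^m b(x,y)\,\mathrm{d}x \geq \delta_m.
\]
Plugging in $a(y)\geq a_* y^\gamma$ from \eqref{LS2} and $\delta_m\geq 1-\chi/m$ from \eqref{LS3} yields, for $m\geq m_0$, the key recursion
\[
M_{m+\gamma}(f) \;\leq\; \frac{m(m-1)}{a_*\,(1-\chi/m)}\, M_{m-2}(f),
\]
which shifts the moment index by $2\alpha=\gamma+2$.

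Iterating $k$ times from some base index $n_0\geq m_0$, for which $M_{n_0}(f)<\infty$ by \Cref{Prop2}, one obtains with $n=n_0+2\alpha k$
\[
M_n(f) \;\leq\; \frac{M_{n_0}(f)}{a_*^{k}}\,\prod_{j=1}^{k} \frac{(n_0+2\alpha j - \gamma)(n_0+2\alpha j - \gamma - 1)}{1-\chi/(n_0+2\alpha j - \gamma)}.
\]
Using Stirling's formula together with Legendre's duplication formula to turn $\Gamma(k+c)^2$ into $\Gamma(2k+\widetilde c)$, this product can be compared with $\eta^{-n/\alpha}\,\Gamma((n+1)/\alpha)$, where $\eta := \sqrt{a_*}/\alpha$. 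The sharp exponential rate is forced automatically by the identity $(2\alpha)^2 \eta^2 = 4 a_*$; the correction factors $1/(1-\chi/m)$ accumulate over the $k$ steps to a polynomial loss $\sim k^{\chi/(2\alpha)}$, which combined with the mismatch between $(k!)^2$ and $\Gamma(2k+\widetilde c)$ produces the threshold $\mu > (\alpha+\chi-1)/2$. Absorbing everything into a single polynomial, one obtains, for each such $\mu$,
\[
M_m(f) \;\leq\; C_\mu\, \eta^{-(m+1)/\alpha}\,\Gamma\!\left(\tfrac{m+1}{\alpha}\right)\, m^{\mu + c(\xi,\gamma)}, \qquad m\geq m_0,
\]
the additional exponent $\xi-\gamma$ reflecting the cost of bounding the base moment $M_{n_0}(af)$ in terms of $M_{n_0+\xi-\gamma}(f)$ via the upper bound $a(x)\leq Kx^\xi$ when $a$ is not a pure power law.

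The pointwise bound follows from \Cref{Prop4}: since $y\mapsto f(y)/y$ is decreasing, $f(y)\geq f(2X)/2$ for $y\in[X,2X]$, whence
\[
M_m(f) \;\geq\; \frac{f(2X)}{2}\int_X^{2X} y^m\,\mathrm{d}y \;\geq\; \frac{2^m X^{m+1}\, f(2X)}{2(m+1)},
\]
that is, $f(X)\leq 2(m+1)\,M_m(f)/X^{m+1}$ for $X\geq 2$. Substituting the moment estimate above and choosing the integer $m$ with $(m+1)/\alpha\approx \eta X^\alpha$, i.e. $m\approx \sqrt{a_*}\,X^\alpha$, a direct Stirling computation yields
\[
f(X) \;\leq\; \kappa_\mu\, X^{1+\alpha+\mu+\xi-\gamma}\, e^{-\sqrt{a_*}X^\alpha/\alpha}, \qquad X\geq 2,
\]
which extends to $X\geq 1$ by continuity of $f$ on $[1,2]$ after enlarging $\kappa_\mu$.

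The main obstacle will be the quantitative Stirling bookkeeping in the middle step: one must analyze the iterated product with enough precision to simultaneously recover the sharp exponential rate $\sqrt{a_*}/\alpha$, identify the threshold $\mu > (\alpha+\chi-1)/2$ forced by the $\chi/m$ correction, and obtain the correct polynomial exponent $1+\alpha+\mu+\xi-\gamma$ in $X$ after the final optimization over $m$. By comparison, the boundary-term justification in Step~1, the monotonicity-based pointwise conversion via \Cref{Prop4}, and the extension from $X\geq 2$ to $X\geq 1$ are routine.
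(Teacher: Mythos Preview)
Your strategy---derive a two-step moment recursion from \eqref{SFD1}, iterate it, control the resulting product via Stirling's formula, and convert to a pointwise bound through the monotonicity of $x\mapsto f(x)/x$ from \Cref{Prop4}---is a legitimate route and is genuinely different from the paper's. The paper instead sums the recursion against exponential weights $\eta^m/m!$ (with the moment index shifted to $m\alpha-\mu$) and extracts a telescoping inequality; this yields directly the weighted integrability $\int_1^\infty e^{\eta x^\alpha} x^{\gamma-\mu-\alpha} f(x)\,\mathrm{d}x<\infty$ without any Stirling bookkeeping, and the threshold $\mu>(\alpha+\chi-1)/2$ appears transparently as the sign condition on the residual coefficient $2\mu+1-\alpha-\chi-2\varepsilon_\mu$. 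The pointwise conversion in the paper is then done via \eqref{AEQ2} combined with the upper bound \eqref{LS2b}, and it is precisely this last step that introduces the extra exponent $\xi-\gamma$.

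One point in your outline is therefore incorrect: the additional $\xi-\gamma$ does \emph{not} come from ``bounding the base moment $M_{n_0}(af)$''. Your recursion $M_{m+\gamma}(f)\le \frac{m(m-1)}{a_*(1-\chi/m)}\,M_{m-2}(f)$ involves only moments of $f$; the base value $M_{n_0}(f)$ is finite by \Cref{Prop2} without any appeal to \eqref{LS2b}, and your pointwise step via \Cref{Prop4} does not use $a$ either. Carried out correctly, your argument never uses \eqref{LS2b} and should produce a bound free of the factor $x^{\xi-\gamma}$---potentially sharper than the stated one when $\xi>\gamma$. The price is that the Stirling bookkeeping you flag as ``the main obstacle'' is genuine work: to recover exactly the threshold $\mu>(\alpha+\chi-1)/2$ and the correct polynomial exponent in $X$, you must track the corrections from the two numerator shifts $(n_0-\gamma)$ and $(n_0-\gamma-1)$, the $\sqrt{k}$ coming from the duplication formula, and the accumulated $k^{\chi/(2\alpha)}$ from the $\chi$-factor, and then the further polynomial loss in the optimisation over $m$ (which is restricted to the lattice $n_0+2\alpha\mathbb{N}$). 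This is doable but considerably more laborious than the paper's summation device, which packages all of it into a single telescoping inequality.
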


\begin{proof}
Let $\mu>(\alpha+\chi-1)/2$ and set $\varepsilon_\mu := (2\mu+1-\alpha-\chi)/4>0$. Consider $m_\mu\ge 2$ such that
\begin{equation}
	m_\mu \alpha \ge \mu+1+\chi\,, \qquad \max\left\{ \frac{\chi(\mu+\alpha)}{m_\mu\alpha-\mu} , \frac{\mu(\mu+1)}{m_\mu \alpha} \right\} \le \varepsilon_\mu\,. \label{LS4}
\end{equation}
For $m\ge m_\mu$, we multiply \eqref{SFD1} by $x^{m\alpha-\mu}$ and integrate over $(0,\infty)$ to obtain, thanks to  \Cref{Prop2} and Fubini's theorem,
\begin{align*}
	(m\alpha-\mu) \int_0^\infty x^{m\alpha-\mu-1} f'(x)\ \mathrm{d}x & + \int_0^\infty x^{m\alpha-\mu} a(x) f(x)\ \mathrm{d}x \\
	& \qquad = \int_0^\infty a(y) f(y) \int_0^y x^{m\alpha-\mu} b(x,y)\ \mathrm{d}x\mathrm{d}y\,.
\end{align*}
Integrating once more by parts, we further obtain
\begin{align*}
	- (m\alpha-\mu) (m\alpha-\mu-1) \int_0^\infty x^{m\alpha-\mu-2} f(x)\ \mathrm{d}x & + \int_0^\infty x^{m\alpha-\mu} a(x) f(x)\ \mathrm{d}x \\
	& \qquad \le (1-\delta_{m\alpha-\mu}) \int_0^\infty y^{m\alpha-\mu} a(y) f(y) \mathrm{d}y\,.
\end{align*}
Hence, 
\begin{equation*}
	\delta_{m\alpha-\mu} \int_0^\infty x^{m\alpha-\mu} a(x) f(x)\ \mathrm{d}x \le (m\alpha-\mu) (m\alpha-\mu-1) \int_0^\infty x^{m\alpha-\mu-2} f(x)\ \mathrm{d}x\,,
\end{equation*}
which gives, together with \eqref{LS2} and the property $m\alpha - 2 = (m-2)\alpha + \gamma$, 
\begin{equation}
	a_* \delta_{m\alpha-\mu} \mathfrak{M}_m \le (m\alpha-\mu) (m\alpha-\mu-1) \mathfrak{M}_{m-2}\,,\qquad m\ge m_\mu\,, \label{LS5}
\end{equation} 
where
$$
\mathfrak{M}_m:= \int_0^\infty x^{m\alpha+\gamma-\mu} f(x)\ \mathrm{d}x\,.
$$
Now, set $\eta := \sqrt{a_*}/\alpha$ and consider $N\ge \max\{m_\mu+2,(\chi+\varepsilon_\mu)/\alpha\}$. On the one hand, we infer from \eqref{LS4} that
\begin{align*}
	\frac{(m\alpha-\mu)(m\alpha-\mu-1)}{m(m-1)\alpha^2} & = 1 - \frac{2\mu+1-\alpha}{(m-1)\alpha} + \frac{\mu(\mu+1)}{m(m-1)\alpha^2} \\
	& \le 1 - \frac{2\mu+1-\alpha-\varepsilon_\mu}{(m-1)\alpha} 
\end{align*}
 for $m\ge m_\mu$, so that
\begin{align*}
	R & := \sum_{m=m_\mu}^N \frac{(m\alpha-\mu) (m\alpha-\mu-1)}{m!} \eta^m \mathfrak{M}_{m-2} \\
	& = a_* \sum_{m=m_\mu}^N \frac{(m\alpha-\mu) (m\alpha-\mu-1)}{m(m-1)\alpha^2} \frac{\eta^{m-2}}{(m-2)!} \mathfrak{M}_{m-2} \\
	& \le  a_* \sum_{m=m_\mu}^N \frac{\eta^{m-2}}{(m-2)!} \mathfrak{M}_{m-2} - a_* \sum_{m=m_\mu}^N \frac{2\mu+1-\alpha-\varepsilon_\mu}{(m-1)\alpha} \frac{\eta^{m-2}}{(m-2)!} \mathfrak{M}_{m-2} \,.
\end{align*}
Hence, 
\begin{align}
	R & \le a_* \sum_{m=m_\mu-2}^{N-2} \frac{\eta^{m}}{m!} \mathfrak{M}_{m}  - a_* \frac{2\mu+1-\alpha-\varepsilon_\mu}{\alpha} \sum_{m=m_\mu-2}^{N-2} \frac{\eta^{m}}{(m+1)!} \mathfrak{M}_{m} \nonumber\\
	&  \le a_* \sum_{m=m_\mu-2}^{N-2} \frac{\eta^{m}}{m!} \mathfrak{M}_{m}  - a_* \frac{2\mu+1-\alpha-\varepsilon_\mu}{\alpha} \sum_{m=m_\mu}^{N-2} \frac{\eta^{m}}{(m+1)!} \mathfrak{M}_{m}\,. \label{LS6} 
\end{align}
On the other hand, by \eqref{LS3} and \eqref{LS4}, 
\begin{align*}
	\delta_{m\alpha-\mu} & \ge 1 - \frac{\chi}{m\alpha-\mu} = 1 - \frac{\chi}{(m+1)\alpha} - \frac{\chi(\mu+\alpha)}{(m+1)(m\alpha-\mu)\alpha} \\
	& \ge 1 - \frac{\chi+\varepsilon_\mu}{(m+1)\alpha}
\end{align*}
for $m\ge m_\mu$, and
\begin{align*}
	L & := a_* \sum_{m=m_\mu}^N \delta_{m\alpha-\mu} \frac{\eta^m}{m!} \mathfrak{M}_{m}  \ge a_* \sum_{m=m_\mu}^N \frac{\eta^m}{m!} \mathfrak{M}_{m} - a_* \frac{\chi+\varepsilon_\mu}{\alpha} \sum_{m=m_\mu}^N  \frac{\eta^m}{(m+1)!} \mathfrak{M}_{m}\\
	& \ge a_* \sum_{m=m_\mu}^{N-2} \frac{\eta^m}{m!} \mathfrak{M}_{m} - a_* \frac{\chi+\varepsilon_\mu}{\alpha} \sum_{m=m_\mu}^{N-2}  \frac{\eta^m}{(m+1)!} \mathfrak{M}_{m} + a_* \sum_{m=N-1}^{N}  \frac{\eta^m}{m!}\left( 1-\frac{\chi+\varepsilon_\mu}{\alpha(m+1)}\right) \mathfrak{M}_{m}\,.
\end{align*}
Thus, noticing that the last term is nonnegative due to the choice of $N$,
\begin{equation}
L \ge a_* \sum_{m=m_\mu}^{N-2} \frac{\eta^m}{m!} \mathfrak{M}_{m} - a_* \frac{\chi+\varepsilon_\mu}{\alpha} \sum_{m=m_\mu}^{N-2}  \frac{\eta^m}{(m+1)!} \mathfrak{M}_{m}\,. \label{LS7}
\end{equation}
Since $L\le R$ by \eqref{LS5}, it follows from \eqref{LS6} and \eqref{LS7} that
\begin{equation*}
	a_* \frac{2\mu+1-\alpha-\chi-2\varepsilon_\mu}{\alpha} \sum_{m=m_\mu}^{N-2}  \frac{\eta^m}{(m+1)!} \mathfrak{M}_m \le \frac{2 a_*}{\alpha} C_1(\mu)\,,
\end{equation*}
where
\begin{equation*}
	C_1(\mu) := \frac{\alpha}{2} \sum_{m=m_\mu-2}^{m_\mu-1} \frac{\eta^{m}}{m!} \mathfrak{M}_m \,.
\end{equation*}
Observe that $C_1(\mu)$ is finite due to the integrability properties of $f$, see \Cref{Prop2}. Owing to the choice of $\varepsilon_\mu$, we end up with
\begin{equation*}
	\varepsilon_\mu \sum_{m=m_\mu}^{N-2}  \frac{\eta^m}{(m+1)!} \int_0^\infty x^{m\alpha+\gamma-\mu} f(x)\ \mathrm{d}x \le C_1(\mu)\,,
\end{equation*}
from which we deduce that
\begin{equation*}
	\sum_{m=m_\mu+1}^{N-1}  \frac{\eta^m}{m!} \int_0^\infty x^{m\alpha+\gamma-\mu-\alpha} f(x)\ \mathrm{d}x \le \frac{\eta C_1(\mu)}{\varepsilon_\mu}\,. 
\end{equation*}
Therefore,
\begin{equation*}
	\int_1^\infty \sum_{m=0}^{N-1} \frac{\eta^m}{m!} x^{m\alpha+\gamma-\mu-\alpha} f(x)\ \mathrm{d}x \le \int_1^\infty \sum_{m=0}^{m_\mu} \frac{\eta^m}{m!} x^{m\alpha+\gamma-\mu-\alpha} f(x)\ \mathrm{d}x + \frac{\eta C_1(\mu)}{\varepsilon_\mu} =: C_2(\mu)\,,
\end{equation*}
and the right-hand side of the above inequality is finite by \Cref{Prop2} and does not depend on $N$. We may then take the limit $N\to\infty$ and deduce from Fatou's lemma that
\begin{equation}
	\int_1^\infty e^{\eta x^\alpha} x^{\gamma-\mu-\alpha} f(x)\ \mathrm{d}x \le C_2(\mu)\,. \label{LS8}
\end{equation}

To transfer the weighted $L_1$-estimate \eqref{LS8} to a pointwise estimate, we invoke  \eqref{AEQ2} which gives, together with \eqref{B1} and \eqref{LS2b},
\begin{equation*}
	- \frac{\mathrm{d}}{\mathrm{d}z} \left( \frac{f(z)}{z} \right) \le \frac{K}{z^2} \int_z^\infty y^{1+\xi} f(y)\ \mathrm{d}y = \frac{K}{z^2} \int_z^\infty y^{1+\xi+\alpha-\mu-\gamma} e^{-\eta y^\alpha} y^{\gamma-\mu-\alpha} e^{\eta y^\alpha} f(y)\ \mathrm{d}y 
\end{equation*}
for $z\ge 1$. We then infer from \eqref{LS8} and the monotonicity of $y\mapsto y^{1+\xi+\alpha-\mu-\gamma} e^{-\eta y^\alpha}$ on $(z_\mu,\infty)$ that
\begin{equation*}
- \frac{\mathrm{d}}{\mathrm{d}z} \left( \frac{f(z)}{z} \right) \le K  C_2(\mu) z^{\xi+\alpha-\mu-\gamma-1} e^{-\eta z^\alpha}\,, \qquad z\ge z_\mu\,,
\end{equation*}
where
\begin{equation*}
	z_\mu^\alpha := \max\left\{ 1 , \frac{(1+\xi+\alpha-\mu-\gamma)_+}{\eta\alpha}\right\}\,.
\end{equation*}
We integrate the above inequality over $(y,\infty)$ for $y\ge z_\mu$ and use once more the monotonicity of $y\mapsto y^{1+\xi+\alpha-\mu-\gamma} e^{-\eta y^\alpha}$ on $(z_\mu,\infty)$ to obtain
\begin{equation*}
	\frac{f(y)}{y} \le K C_2(\mu) \int_y^\infty \frac{z^{1+\xi+\alpha-\mu-\gamma} e^{-\eta z^\alpha}}{z^2}\ \mathrm{d}z \le K C_2(\mu) y^{1+\xi+\alpha-\mu-\gamma} \frac{e^{-\eta y^\alpha}}{y}\,,
\end{equation*}
and thereby complete the proof.
\end{proof}

\section{Explicit Solutions}\label{Sec5}

We finally sketch the proof of \Cref{Prop1} and recall that we consider the case where $a$ and $b$ are explicitly given by
\begin{equation*}
	a(x) = \mathfrak{a}x^\gamma \,,\quad b(x,y) = (\nu+2) \frac{x^\nu}{y^{\nu+1}}\,, \qquad 0<x<y\,, 
\end{equation*}
for some $\mathfrak{a}>0$, $\gamma\ge 0$ and $\nu\in (-2,0]$. With this specific choice of $a$ and $b$, it follows from \eqref{AEQ1} that $f$ solves 
\begin{equation*}
	f(z) - zf'(z) = \mathfrak{a}z^{\nu+2} \int_z^\infty y^{\gamma-\nu-1} f(y)\ \mathrm{d}y\,, \qquad z>0\,.
\end{equation*}
Introducing
\begin{equation}
	P(z) := \int_z^\infty y^{\gamma-\nu-1} f(y)\ \mathrm{d}y\,, \qquad z>0\,, \label{ES1}
\end{equation}
we infer from the above equation and the integrability properties of $f$ that $P$ solves
\begin{equation}
	z P''(z) - (\gamma-\nu) P'(z) - \mathfrak{a}z^{\gamma+1} P(z) = 0\,, \qquad z>0\,, \qquad \lim_{z\to\infty} P(z) =0\,. \label{ES2}
\end{equation}
We introduce another unknown function $Q$ defined by 
\begin{equation}
	z^{\beta} Q\left( \frac{\sqrt{\mathfrak{a}}z^\alpha}{\alpha} \right) := P(z)\,, \qquad z>0\,, \label{ES3} 
\end{equation}
with
\begin{equation}
	\alpha = \frac{\gamma+2}{2} \ge 1\,, \qquad \beta := \frac{1+\gamma-\nu}{2} \ge \frac{1}{2}\,, 
\end{equation}
and substitute \eqref{ES3} into \eqref{ES2} to obtain that $Q$ solves
\begin{equation}
	y^2 Q''(y) + y Q'(y) - \left( y^2 + \left( \frac{\beta}{\alpha} \right)^2 \right) Q(y) = 0\,, \qquad y>0\,, \qquad \lim_{y\to\infty} Q(y) =0\,. \label{ES4}
\end{equation}
That is, $Q$ is a bounded solution to the modified Bessel equation \cite[Equation~10.25]{DLMF}, from which we deduce that there is a positive constant $c>0$ such that $Q(y) = c K_{\beta/\alpha}(y)$ for $y>0$, where $K_\rho$ denotes the modified Bessel function of the second kind with parameter $\rho\ge 0$. Coming back to $P$ and using \eqref{ES3}, we end up with 
\begin{equation*}
	P(z) = c z^{\beta} K_{\beta/\alpha}\left( \frac{\sqrt{\mathfrak{a}}z^\alpha}{\alpha} \right) = c \left(\frac{\alpha}{\sqrt{\mathfrak{a}}}\right)^{\beta/\alpha} \left( \frac{ \sqrt{\mathfrak{a}}z^\alpha}{\alpha} \right)^{\beta/\alpha} K_{\beta/\alpha}\left( \frac{ \sqrt{\mathfrak{a}}z^\alpha}{\alpha} \right) \,, \qquad z>0\,.
\end{equation*}
Since
\begin{equation*}
	\frac{\mathrm{d}}{\mathrm{d}z} \left( z^\rho K_\rho(z) \right) = - z^\rho K_{\rho-1}(z)\,, \qquad (\rho,z)\in [0,\infty)^2\,,
\end{equation*}
by \cite[Equation~10.29.4]{DLMF}, it follows from the explicit formula for $P$ and the chain rule that
\begin{equation*}
	P'(z) = - c \sqrt{\mathfrak{a}} \left(\frac{\alpha}{\sqrt{\mathfrak{a}}}\right)^{\beta/\alpha} \left( \frac{ \sqrt{\mathfrak{a}}z^\alpha}{\alpha} \right)^{\beta/\alpha} K_{(\beta-\alpha)/\alpha}\left( \frac{\sqrt{\mathfrak{a}}z^\alpha}{\alpha} \right) z^{\alpha-1} = - c  \sqrt{\mathfrak{a}} z^{\alpha+\beta-1} K_{(\beta-\alpha)/\alpha}\left( \frac{\sqrt{\mathfrak{a}}z^\alpha}{\alpha} \right)
\end{equation*}
for $z>0$. Recalling that $f(z) = - z^{1+\nu-\gamma} P'(z)$ by \eqref{ES1}, we conclude that 
\begin{equation*}
	f(z) = c \sqrt{\mathfrak{a}}z^{(\nu+3)/2} K_{|\nu+1|/(\gamma+2)}\left( \frac{ \sqrt{\mathfrak{a}}z^\alpha}{\alpha} \right)\,, \qquad z\in (0,\infty)\,,
\end{equation*}
as claimed in \Cref{Prop1}.

\section*{Acknowledgments}

This work was done while PhL enjoyed the kind hospitality of the Institut f\"ur Angewandte Mathematik, Leibniz Universit\"at Hannover.	

\bibliographystyle{siam}
\bibliography{StatSolFragmDiff}
	
\end{document}